\begin{document}
	\newtheorem{theorem}{Theorem}[section]
	\newtheorem{question}{Question}[section]
	\newtheorem{thm}[theorem]{Theorem}
	\newtheorem{lem}[theorem]{Lemma}
	\newtheorem{eg}[theorem]{Example}
	\newtheorem{prop}[theorem]{Proposition}
	\newtheorem{cor}[theorem]{Corollary}
	\newtheorem{rem}[theorem]{Remark}
	\newtheorem{deff}[theorem]{Definition}
	\numberwithin{equation}{section}
	\title{Nonnil-$S$-Laskerian Rings}
	
	\author[1]{Tushar Singh}
	\author[2]{Ajim Uddin Ansari}
	\author[3]{Shiv Datt Kumar}

	\affil[1, 3]{\small Department of Mathematics, Motilal Nehru National Institute of Technology Allahabad, Prayagraj 211004, India \vskip0.01in Emails: sjstusharsingh0019@gmail.com, tushar.2021rma11@mnnit.ac.in, sdt@mnnit.ac.in}
	\vskip0.05in
	\affil[2]{\small Department of Mathematics, CMP Degree College, University of Allahabad, Prayagraj-211002, India \vskip0.01in Email: ajimmatau@gmail.com}
	\maketitle
	\hrule
	
	\begin{abstract}
		\noindent
	In this paper, we introduce the concept of nonnil-$S$-Laskerian rings, which generalize both nonnil-Laskerian rings and $S$-Laskerian rings. A ring $R$ is said to be nonnil-$S$-Laskerian if every nonnil ideal $I$ (disjoint from $S$) of $R$ is $S$-decomposable. As a main result, we prove that the class of nonnil-$S$-Noetherian rings belongs to the class of nonnil-S-Laskerian rings. Also,	we prove that a nonnil-$S$-Laskerian ring has $S$-Noetherian spectrum under a mild condition.	Among other results, we prove that if the power series ring $R[[X]]$ is nonnil-$S$-Laskerian with
	$S$-decomposable nilradical, then $R$ is $S$-laskerian and satisfies the $S$-SFT property.

	\end{abstract}

	\textbf{Keywords:} Nonnil-$S$-Noetherian rings, nonnil-$S$-Laskerian rings, Laskerian rings,  $S$-Laskerian rings.\\
	\textbf{MSC(2020):} 13E05, 13E99, 13F25, 16N40.
	\hrule
	
	\section{Introduction}
	Throughout the paper, let $R$ be a commutative ring with identity, and $S$ be a multiplicative closed subset of $R$. We denote $Nil(R)$ by the set of nilpotent elements of $R$  called nilradical of $R$. An ideal $I$ of $R$ is said to be a nonnil ideal if $I\nsubseteq Nil(R)$. An ideal $I$ of $R$ is said to be divided if $I\subset (a)$, for every $a\in R\setminus I$.  In 2003, Badawi \cite{ab03} introduced the concept of nonnil-Noetherian rings as a generalization of Noetherian rings. A ring $R$ is said to be a nonnil-Noetherian ring if each nonnil ideal of $R$ is finitely generated. They transfered several well-known results of Noetherian rings to this class of rings. In 2020, Kwon and Lim \cite{mj20} extended the concept of nonnil-Noetherian rings to nonnil-$S$-Noetherian rings. A ring $R$ is said to be a nonnil-$S$-Noetherian ring if every nonnil ideal of $R$ is $S$-finite. 
	
	Theory of Laskerian rings, initiated by Lasker-Noether (\cite{el05}, \cite{ne21}), is one of the basic tools for both  commutative algebra and algebraic geometry. A ring $R$ is called Laskerian if every ideal of $R$ can be written as a finite intersection of primary ideals. It gives an algebraic foundation for decomposing an algebraic variety into its irreducible components. It is proved in historical Lasker-Noether theorem that every Noetherian ring is Laskerian. Due to their significance, numerous authors attempted to generalize the concept of Laskerian rings (see  \cite{rg80},  \cite{mj20}, \cite{sm22}, \cite{ts23}, \cite{ts24}, and \cite{vs22}). 	For instance, Subramanian \cite{vs22} introduced the concept of $S$-Laskerian rings, and Moulahi \cite{sm22} introduced the concept of nonnil-Laskerian rings as proper generalizations of Laskerian rings. A ring $R$ is called nonnil-Laskerian (respectively, $S$-Laskerian) if every nonnil ideal (respectively, every ideal disjoint from $S$) of $R$ can be written as a finite intersection of primary ideals (respectively, $S$-primary ideals). Hizem \cite{sh08} showed that the class of nonnil-Noetherian rings belongs to the class of nonnil-Laskerian rings.

	In this paper, we introduce the concept of nonnil-$S$-Laskerian rings, which generalize both nonnil-Laskerian rings and $S$-Laskerian rings. We provide an example of a nonnil-$S$-Laskerian ring which is not nonnil-Laskerian (see Example \ref{laskerian}). We extend several properties and characterizations of both  nonnil-Laskerian rings and $S$-Laskerian rings to this new class of rings. Among other results, Gilmer and Heinzer \cite{rg80} proved that every Laskerian ring has a Noetherian spectrum. We extend this result for the case of nonnil-$S$-Laskerian rings (see Theorem~\ref{spectrum}). It is well known that every $S$-primary ideal is $S$-decomposable but converse is not true in general. We provide a sufficient condition for an $S$-decomposable ideal to be $S$-primary (see Proposition \ref{primed}). Further, Eljeri \cite{em18} introduced the concept of the $S$-strong finite type rings (in short, $S$-SFT rings) as a generalization of SFT rings. We provide a connection between nonnil-$S$-Laskerian rings and $S$-SFT rings. More precisely, we prove that if a power series ring $R[[X]]$ is nonnil-$S$-Laskerian with $Nil(R[[X]])$ being $S$-decomposable, then $R$ is an $S$-SFT ring (see Proposition~\ref{psft}). Finally, as a main result of the paper, we prove that every nonnil-$S$-Noetherian ring is a nonnil-$S$-Laskerian ring (see Theorem~\ref{on}).
	
	\section{Main Results}
	\noindent
Recall \cite[Definition 2.1]{me22}, a proper ideal $Q$ of a ring $R$ disjoint from $S$ is said to be $S$-primary if there exists an $s \in S$ such that for all $a, b \in R$, if $ab \in Q$, then either $sa \in Q$ or $sb \in rad(Q)$. By \cite[Proposition 2.5]{me22}, if $Q$ is an $S$-primary ideal of $R$, then $P=\text{rad}(Q)$ is an $S$-prime ideal. In such a case, we say that $Q$ is a $P$-$S$-primary ideal of $R$. For an ideal $I$ of a ring $R$, we denote $S(I)=\{a\in R \hspace{0.2cm}| \hspace{0.2cm}\dfrac{a}{1}\in S^{-1}I\}$ called the contraction of $I$ with respect to $S$. We begin this section by introducing the concepts of $S$-decomposable ideals and nonnil-$S$-Laskerian rings.

	\begin{deff}\label{dec}
		\noindent
		Let $R$ be a ring and let $S$ be a multiplicative set of $R$. Let
		$I$ be an ideal of $R$ such that $I\cap S=\emptyset$. We say that $I$ admits $S$-primary decomposition if $I$ is a finite intersection of $S$-primary ideals of $R$. In such a case, we say that $I$ is $S$-decomposable. An $S$-primary decomposition $I=\bigcap\limits_{i=1}^{n}Q_{i}$ of $I$ with $rad(Q_{i})=P_{i}$ for each $i\in\{1, 2,\ldots, n\}$ is said to be minimal if the following conditions hold:
		\leavevmode
		\begin{enumerate}
			\item $S(P_{i})\neq S(P_{j})$ for all distinct $i, j\in\{1, 2, \ldots, n\}$.
			\item $S(Q_{i})\nsupseteq\bigcap_{j\in\{1, 2, \ldots, n\}\setminus\{i\}}S(Q_{j})$ for each $i\in \{1,  2, \ldots, n\}$ (equivalently,\\
			$S(Q_{i})\nsupseteq \bigcap_{j\in\{1, 2, \ldots, n\}\setminus\{i\}}Q_{j}$ for each $i\in \{1,  2, \ldots, n\})$. 
		\end{enumerate}
	\end{deff}
	
	\noindent
	Clearly, the concepts of $S$-primary decomposition and primary decomposition coincide for $S=\{1\}$. The following example shows that the concept of $S$-primary decomposition is a proper generalization of the concept of primary decomposition.
	\begin{eg}\label{bool}
		\noindent
		Consider the boolean ring $R=\mathbb{Z}_{2}\times \mathbb{Z}_{2}\times\cdots\times\mathbb{Z}_{2}\times\cdots$ (countably infinite copies of  $\mathbb{Z}_{2})$. According to \cite[Theorem 1]{vd17}, the zero ideal $(0)=(0,0,0,\ldots)$ in $R$ has no primary decomposition. Next, we show that $(0)$ is an  $S$-primary ideal of $R$. First, we observe that $(0)\cap S=\emptyset$. Now, let $a=(a_n)_{n\in\mathbb{N}}$, $b=(b_n)_{n\in\mathbb{N}} \in R$ such that $ab=0$, where each $a_{i},  {b_{i}}\in \mathbb{Z}_{2}$. This implies that $a_nb_n=0$ for all $n\in \mathbb{N}$, in particular, $a_1b_1=0$. Then we have either $a_1=0$ or $b_1=0$. If $a_1=0$, then $sa=0$.  If $b_1=0$, then $sb=0$. Thus $(0)$ is an $S$-primary ideal, and so is $S$-primary decomposable.
	\end{eg}
	

	\begin{prop}\label{ry}
		Let $S$ be a multiplicative set of a ring $R$. Then the following statements hold:
		\begin{enumerate}
			\item Finite intersection of $P$-$S$-primary ideals is $P$-$S$-primary.
			\item If $Q$ is a $P$-primary ideal of $R$ with $Q\cap S=\emptyset$, then for any ideal $J$ of $R$ with $J\cap S\neq\emptyset$, $Q \cap J$ is a $(P\cap rad(J))$-$S$-primary ideal of $R$.
		\end{enumerate}
	\end{prop}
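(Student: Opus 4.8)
The plan is to treat the two parts separately; in each I would produce a single explicit element of $S$ that serves as the witness in the definition of $S$-primariness and then identify the radical of the intersection.

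For part (1), write the given ideals as $Q_1,\dots,Q_n$, so that $\operatorname{rad}(Q_i)=P$ for each $i$, and for each $i$ choose $s_i\in S$ with the property that $ab\in Q_i$ forces $s_ia\in Q_i$ or $s_ib\in P$. Put $Q:=\bigcap_{i=1}^{n}Q_i$ and $s:=s_1s_2\cdots s_n\in S$. Since $Q\subseteq Q_1$ is disjoint from $S$, it is a proper ideal disjoint from $S$, and $\operatorname{rad}(Q)=\bigcap_{i}\operatorname{rad}(Q_i)=P$. To check the $S$-primary condition, suppose $ab\in Q$; then $ab\in Q_i$ for every $i$, so for each $i$ either $s_ia\in Q_i$ or $s_ib\in P$. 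If $s_{i_0}b\in P$ for some index $i_0$, then $sb\in P$ since $P$ is an ideal; otherwise $s_ia\in Q_i$ for all $i$, whence $sa\in Q_i$ for all $i$ and therefore $sa\in Q$. In either case $sa\in Q$ or $sb\in\operatorname{rad}(Q)$, which is exactly the requirement.

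For part (2), the key remark is that any element of $J\cap S$ already works as a witness. First, $Q\cap J\subseteq Q$ is disjoint from $S$, hence a proper ideal disjoint from $S$, and $\operatorname{rad}(Q\cap J)=\operatorname{rad}(Q)\cap\operatorname{rad}(J)=P\cap\operatorname{rad}(J)$. Now fix $s\in J\cap S$ and suppose $ab\in Q\cap J$. Since $ab\in Q$ and $Q$ is primary in the classical sense with $\operatorname{rad}(Q)=P$, either $a\in Q$ or $b\in P$. If $a\in Q$ then $sa\in Q$, and $sa\in J$ because $s\in J$, so $sa\in Q\cap J$. If $b\in P$ then $sb\in P$, and $sb\in J\subseteq\operatorname{rad}(J)$ because $s\in J$, so $sb\in P\cap\operatorname{rad}(J)=\operatorname{rad}(Q\cap J)$. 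Hence $Q\cap J$ is $S$-primary; invoking \cite[Proposition 2.5]{me22}, its radical $P\cap\operatorname{rad}(J)$ is then $S$-prime, and $Q\cap J$ is $(P\cap\operatorname{rad}(J))$-$S$-primary.

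I do not anticipate a serious obstacle: both arguments are short. The one point requiring attention is that the definition of $S$-primary asks for a single witness $s$ valid for all pairs $a,b$, which in part (1) forces the choice $s=s_1\cdots s_n$ (instead of handling each $Q_i$ with its own element) and in part (2) makes it essential that the witness be taken inside $J$, so that multiplication by it moves elements into $J$, and hence into $\operatorname{rad}(J)$.
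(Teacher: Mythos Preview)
Your proof is correct and follows essentially the same approach as the paper's: in part~(1) you take $s=s_1\cdots s_n$ and split into cases according to whether some $s_ib\in P$ (the paper argues the contrapositive, assuming $sy\notin Q$ and finding an index $k$ with $s_ky\notin Q_k$, but the logic is identical), and in part~(2) you pick the witness $s$ inside $J\cap S$ exactly as the paper does. Your presentation of part~(1) is in fact a bit cleaner than the paper's, which contains a stray sentence about ``for every $s\in S$'' before settling on the fixed witness.
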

	\begin{proof}
		\leavevmode
		\begin{enumerate}
			\item Let $Q_1,Q_2,\ldots,Q_n$ be $P$-$S$-primary ideals, then $S\cap Q_{i} =\emptyset$ for each $i=1,2,\ldots,n$, and so  $S\cap(\bigcap\limits_{i=1}^{n}Q_{i}) =\emptyset$. Suppose $Q=\bigcap\limits_{i=1}^{n}Q_{i}$. Since each $Q_{i}$ is $P$-$S$-primary, $rad(Q)=rad(\bigcap\limits_{i=1}^{n}Q_{i})=\bigcap\limits_{i=1}^{n}rad(Q_{i})=P$. Now, let $xy\in Q$,  where $x,y\in R$ and with $sy\notin Q$ for all $s\in S$. Consequently, for every $s\in S$, there exists $k_{s}$ such that $xy\in Q_{k_{s}}$ and $sy\notin Q_{k_{s}}$. Let $s_{i}\in S$ be the element satisfying the $S$-primary property for $Q_{i}$. Since we have finitely many $Q_{i}$, put $s=s_{1}s_{2}\dots s_{n}\in S$. Now, fix $s$ and assume that $xy\in Q$ but $sy\notin Q$. Thus, there exists $k$ such that $xy\in Q_{k}$ and $sy\notin Q_{k}$. Then for $s_{k}\in S$, we obtain $s_{k}x\in rad(Q_k)=P$ or $s_{k}y\in Q_{k}$. The latter case gives $sy\in Q_{k}$, a contradiction. Thus $sx\in rad(Q)=P$, and therefore $Q$ is $P$-$S$-primary.
			\item As $Q\cap S=\emptyset$, it follows that $(Q\cap J)\cap S=\emptyset$. By assumption, $J\cap S\neq\emptyset$.
			Let $s \in J\cap S$. Let $a, b\in R$ be such that $ab\in Q \cap J$. Either $a\in Q$ or $b\in rad(Q) = P$ since $Q$ is a $P$-primary ideal of $R$. Hence either $sa\in Q \cap J$ or $sb \in P\cap J \subseteq P\cap rad(J) = rad(Q)\cap rad(J) = rad(Q \cap J)$. This proves that $Q\cap J$ is a $(P\cap rad(J))$-$S$-primary ideal of $R$.
		\end{enumerate}
	\end{proof}
	
	\begin{rem}\label{11}
		\noindent
		Let $R$ be a ring, and let $S$ be a multiplicative closed subset of $R$. Let $I$ be an ideal of $R$ such that $I\cap S=\emptyset$. Suppose that $I$ admits an $S$-primary decomposition. Let $I=\bigcap\limits_{i=1}^{n}Q_{i}$ be an $S$-primary decomposition of $I$ with $Q_{i}$ is a $P_{i}$-$S$-primary ideal of $R$ for each $i\in\{1,2,\ldots, n\}$. Then, by \cite[Proposition 2.7]{me22}, $S^{-1}Q_{i}$ is $S^{-1}P_{i}$-primary, $S(I)=\bigcap\limits_{i=1}^{n}S(Q_{i})$ is a primary decomposition with $S(Q_{i})$ is a $S(P_{i})$-primary for each $i\in\{1,\ldots, n\}$. Let $1\leq i\leq n$. Let $s_{i}\in S$ be such that it satisfies the $S$-primary property of $Q_{i}$. Notice that $(P_{i}:s_{i})$ is a prime ideal of $R$ and $(Q_{i}:s_{i})$ is a $(P_{i}:s_{i})$-primary ideal of $R$. Let $s=\prod_{i=1}^{n}s_{i}$. Then $s\in S$. Observe that $(Q_{i}:s_{i})=(Q_{i}:s)$, $(P_{i}:s_{i})=(P_{i}:s)$, $S(Q_{i})=(Q_{i}:s)$, and $S(P_{i})=(P_{i}:s)$. From $I=\bigcap\limits_{i=1}^{n}Q_{i}$, it follows that $(I:s)=\bigcap\limits_{i=1}^{n}(Q_{i}:s)=\bigcap\limits_{i=1}^{n}S(Q_{i})=S(I)$. Let $k$ of the $S(P_1), \ldots, S(P_n)$ be distinct. After a suitable rearrangement of $\{1, \ldots, n\}$, we can assume without loss of generality that $S(P_{1}),\ldots, S(P_k)$ are distinct among $S(P_{1}),\ldots, S(P_n)$. Let $A_{1}=\{j\in\{1, \ldots, n\}\mid S(Q_{j})~ is ~S(P_{1})$-primary$\}$, \ldots, $A_{k}=\{j\in\{1, \ldots, n\}\mid S(Q_{j})~ is ~S(P_{k})$-primary $\}$. 
		
		It is clear that $1 \in A_1,\ldots, k\in A_{k}$, and $\{1,\ldots,n\}=\bigcup\limits_{t=1}^{k}A_{t}$. Let $1\leq t\leq k$. Notice that $\bigcap_{j\in A_{t}}S(Q_{j})$ is $S(P_{t})$-primary by \cite[Lemma 4.3]{fm69}. It is convenient to denote $\bigcap_{j\in A_{t}}Q_{j}$ by $I'_{t}$. Thus $(I:s)=S(I)=S(I'_{1})\cap S(I'_{2})\cap \cdots\cap S(I'_{k})$ with $S(I'_{t})$ is $S(P_{t})$-primary for each $t\in \{1,\ldots, k\}$ and $S(P_1),\ldots, S(P_k)$ are distinct. After omitting those $S(I'_{i})$ such that $S(I'_{i})\supseteq \bigcap_{t\in\{1,\ldots, k\}\setminus\{i\}}S(I'_{t})$ from the intersection, we can assume  without loss of generality that $(I:s)=S(I)=\bigcap\limits_{t=1}^{k}S(I'_{t})$ is a minimal primary decomposition of $S(I)$. Next, we claim that $I=(I:s)\cap (I+Rs)$. It is clear that $I\subseteq (I:s)\cap (I+Rs)$. Let $y\in(I:s)\cap (I+Rs)$. Then $ys\in I$ and $y=a+rs$ for some $r\in R$. This implies that $ys=as+rs^{2}$ and so, $rs^{2}\in I$. Hence, $r\in S(I)=(I:s)$. Therefore, $y=a+rs\in I$. This shows that $(I:s)\cap (I+Rs)\subseteq I$. Thus $I=(I:s)\cap (I+Rs)$ and hence, $I=(\bigcap\limits_{t=1}^{k}S(I'_{t})\cap (I+Rs))=\bigcap\limits_{t=1}^{k}(S(I'_{t})\cap (I+Rs))$. Let $1\leq t\leq k$.  For convenience, let us denote $S(I'_{t})\cap (I+Rs)$ by $Q'_{t}$. As $S(I'_{t})$ is $S(P_{t})$-primary with $S(I'_{t})\cap S=\emptyset$ and $(I+Rs)\cap S\neq \emptyset$, we obtain from Proposition \ref{ry}(2) that $Q'_{t}$ is $S(P_{t})\cap rad(I+Rs)$-$S$-primary. Since $S(S(P_{t}))=S(P_{t})$, $S(rad(I+Rs))=R$, it follows that $S(S(P_{t})\cap rad(I+Rs))=S(P_{t})$. Notice that $S(Q'_{t})=S(S(I'_{t})\cap (I+Rs))=S(I'_{t})$, as $S(S(I'_{t}))=S(I'_{t})$ and $S(I+Rs)=R$. Hence, for all distinct $i, j\in\{1,\ldots, k\}$, $S(S(P_{i})\cap rad(I+Rs))\neq S(S(P_{j})\cap rad(I+Rs))$ and for each $i$ with $1\leq i\leq t$, $S(Q'_{i})\nsupseteq \bigcap_{t\in\{1,\ldots, k\}\setminus\{i\}}S(Q'_{t})$. Therefore, $I=\bigcap\limits_{t=1}^{k}Q'_{t}$ is a minimal $S$-primary decomposition.
		
	\end{rem}

	\begin{deff}
		A  ring $R$ is said to be a nonnil-$S$-Laskerian ring if every nonnil-ideal (disjoint from $S$) of $R$  is $S$-decomposable.
	\end{deff}

		Let $S_1 \subseteq S_2$ be two multiplicative subsets of a ring $R$. If $R$ is a nonnil-$S_1$-Laskerian ring, then $R$ is also a  nonnil-$S_2$-Laskerian ring. If $R$ is a reduced ring, the notion of  nonnil-$S$-Laskerian rings coincides exactly with that of $S$-Laskerian rings.
		
		 Recall from \cite{ah18} that a ring $R$ is said to have a \textit{Noetherian spectrum} if $R$ satisfies the ascending chain condition (ACC) on radical ideals. This is equivalent to the condition that $R$ satisfies the ACC on prime ideals, and each ideal has only finitely many prime ideals minimal over it. 
		 
		 Clearly, every nonnil-Laskerian ring is a nonnil-$S$-Laskerian ring. However, the following example shows that a nonnil-$S$-Laskerian ring may not be a nonnil-Laskerian ring. 
	
	\begin{eg}\label{laskerian}
		Let  $R=F[x_1,x_2,\ldots, x_{n}, \ldots]$ be the polynomial ring  in infinitely many indeterminates over a field  $F$.  Since $R$ has an ascending chain of prime ideals $(x_1)\subseteq (x_1,  x_2)\subseteq \cdots\subseteq (x_1,x_2,\ldots, x_n)\subseteq\cdots$ which does not terminate, so $R$ has no Noetherian spectrum. Evidently, $Nil(R)=(0)$ is the primary ideal so it is decomposable. This implies that $R$ is  not a nonnil-Laskerian ring since every nonnil-Laskerian ring  with decomposable nilradical $Nil(R)$ has a Noetherian spectrum, by \cite[Proposition 2.11]{sm22}. Consider the multiplicative set $S=R\setminus\{0\}$. Since $R$ has no nonnil ideal which is disjoint from $S$, and hence $R$ is a nonnil-$S$-Laskerian ring but not a nonnil-Laskerian.
	\end{eg}
%
	\begin{prop}
		Let $R$ be a nonnil-$S$-Laskerian ring and $I$ an ideal of $R$. If $I\subset Nil(R)$, then the quotient ring $R/I$ is nonnil-$\overline{S}$-Laskerian, where $\overline{S}=\{s+I\mid s\in S\}$ is a multiplicative closed subset of $R/I$. 
	\end{prop}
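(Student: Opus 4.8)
The plan is to run the standard lattice-isomorphism argument for the quotient $R/I$, exploiting that passing modulo a nil ideal leaves the nilradical ``above $I$'' unchanged. Recall that ideals of $R/I$ correspond bijectively to ideals $J$ of $R$ with $I\subseteq J$, via $J\mapsto \overline{J}:=J/I$, and that this correspondence preserves inclusions, finite intersections, and radicals, i.e. $rad(\overline{J})=rad(J)/I$ whenever $I\subseteq J$. First I would record two elementary facts. (a) $\overline{S}$ is a multiplicative subset of $R/I$: it contains $1+I$ and is closed under products because $S$ is. (b) Since $I\subseteq Nil(R)$ we have $rad(I)=Nil(R)$, and hence $Nil(R/I)=rad(I)/I=Nil(R)/I$. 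From (b) it follows that, for an ideal $J$ of $R$ with $I\subseteq J$, the ideal $\overline{J}$ is nonnil in $R/I$ if and only if $J\nsubseteq Nil(R)$ (that is, $J$ is nonnil in $R$), and $\overline{J}$ is disjoint from $\overline{S}$ if and only if $J\cap S=\emptyset$ (because $s+I\in\overline{J}$ is equivalent to $s\in J$, using $I\subseteq J$).

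Next I would take an arbitrary nonnil ideal of $R/I$ disjoint from $\overline{S}$, write it as $\overline{J}=J/I$ with $I\subseteq J$, and apply the preceding remarks to conclude that $J$ is a nonnil ideal of $R$ with $J\cap S=\emptyset$. The hypothesis that $R$ is nonnil-$S$-Laskerian then yields an $S$-primary decomposition $J=\bigcap_{i=1}^{n}Q_i$. Since $I\subseteq J\subseteq Q_i$ for each $i$, the ideals $\overline{Q_i}=Q_i/I$ are well defined and $\overline{J}=\bigcap_{i=1}^{n}\overline{Q_i}$, so it remains only to show that each $\overline{Q_i}$ is $\overline{S}$-primary in $R/I$. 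Fixing $i$ and a witness $s_i\in S$ for the $S$-primariness of $Q_i$, note that $\overline{Q_i}$ is proper (as $1\notin Q_i$) and disjoint from $\overline{S}$ (as $Q_i\cap S=\emptyset$); and if $(a+I)(b+I)\in\overline{Q_i}$, then $ab\in Q_i$ (using $I\subseteq Q_i$), so $s_ia\in Q_i$ or $s_ib\in rad(Q_i)$, which upon reading in $R/I$ and using $rad(\overline{Q_i})=rad(Q_i)/I$ gives $(s_i+I)(a+I)\in\overline{Q_i}$ or $(s_i+I)(b+I)\in rad(\overline{Q_i})$. Hence $s_i+I$ witnesses that $\overline{Q_i}$ is $\overline{S}$-primary, so $\overline{J}$ is $\overline{S}$-decomposable and $R/I$ is nonnil-$\overline{S}$-Laskerian.

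The entire argument is bookkeeping through the correspondence theorem; the only point that genuinely uses the hypothesis $I\subseteq Nil(R)$ (rather than $I$ arbitrary) is the identity $Nil(R/I)=Nil(R)/I$, which is exactly what guarantees that a nonnil ideal of $R/I$ lifts to a nonnil ideal of $R$ so that the Laskerian hypothesis is applicable. I expect no substantive obstacle beyond carefully transporting the witness $s_i$ through the quotient, and would present those verifications tersely.
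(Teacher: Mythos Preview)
Your proposal is correct and follows essentially the same route as the paper's proof: identify $Nil(R/I)=Nil(R)/I$, lift a nonnil ideal of $R/I$ disjoint from $\overline{S}$ to a nonnil ideal of $R$ disjoint from $S$, apply the hypothesis, and push the resulting $S$-primary decomposition through the quotient. The only cosmetic difference is that the paper cites \cite[Proposition~2.10]{me22} for the fact that $Q_i/I$ is $\overline{S}$-primary, whereas you verify it directly by transporting the witness $s_i$; both are fine.
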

	\begin{proof}
		Let $y+I\in Nil(R/I)$. Then there exists $k\in\mathbb{N}$ such that $y^{k}+I=I$. This implies that $y^{k}\in I\subset Nil(R)$, and so $y\in rad(Nil(R))=Nil(R)$. Thus $Nil(R/I)\subseteq Nil(R)/I$. For reverse containment, let $\alpha +I\in Nil(R)/I$, where $\alpha\in Nil(R)$. Then there exists $n\in\mathbb{N}$ such that $\alpha^{n}=0$. This implies that $\alpha^{n}+I=I$, and so $(\alpha+I)^{n}=I$. Thus $\alpha+I\in Nil(R/I)$, and therefore $Nil(R/I)=Nil(R)/I$. Now, let $J$ be a proper nonnil ideal (disjoint from $\overline{S}$) of $R/I$. Write $J=Q'/I$, where $Q'$ is a proper ideal of $R$ containing $I$. Since $J\nsubseteq Nil(R/I)=Nil(R)/I$, $Q'\nsubseteq Nil(R)$. This implies that $Q'$ is a nonnil ideal of $R$. Now, we show that $Q'$ is disjoint from $S$. If $Q'\cap S\neq\emptyset$, then there exists $s\in S$ such that $s\in Q'$. This implies that $s+I\in Q'/I=J$, a contradiction, as $J$ is disjoint from $\overline{S}$. Thus $Q'$ is a nonnil ideal (disjoint from $S$) of \( R \). Since $R$ is nonnil-$S$-Laskerian, $Q'$ is $S$-decomposable, i.e., $Q'=\bigcap\limits_{i=1}^{n}Q_{i}$, where $Q_{i}$ $(1\leq i\leq n)$ is $P_{i}$-$S$-primary. Consequently, $J=\left(\bigcap\limits_{i=1}^{n}Q_{i}\right)/I=\bigcap\limits_{i=1}^{n}(Q_{i}/I)$. By \cite[Proposition 2.10]{me22}, each $Q_{i}/I$ is $\overline{S}$-primary. Thus $J$ is $\overline{S}$-decomposable, and so $R/Nil(R)$ is nonnil-$\overline{S}$-Laskerian.
	\end{proof}
	
	\begin{rem}
		Let $R$ be a nonnil-$S$-Laskerian ring. Then $S^{-1}R$ is a nonnil-Laskerian ring.  To show this, let \( J \) be a proper nonnil ideal of \( S^{-1}R \). Then $J = S^{-1}I$ for some proper nonnil ideal \( I \) of \( R \) which is disjoint from \( S \). Evidently, $I$ is $S$-decomposable. Write $I = \bigcap\limits_{i=1}^{n}Q_{i}$, where each \(Q_{i} \) is \( S \)-primary. Consequently, $J=S^{-1}I = \bigcap\limits_{i=1}^{n}S^{-1}Q_{i}$. By \cite[Proposition 2.7(3)]{me22}, each \( S^{-1}Q_{i}\) is a primary ideal in $S^{-1}R$, and so $J$ is $S$-decomposable. 
	\end{rem}
	
	\begin{prop}\label{las}
		Let $R$ be a ring with $S$-decomposable nilradical. If R is  nonnil-$S$-Laskerian, then $R/Nil(R)$ is an $\overline{S}$-Laskerian ring, where $\overline{S}=\{s+Nil(R)\mid s\in S\}$ is a multiplicative closed subset of $R/Nil(R)$.  
	\end{prop}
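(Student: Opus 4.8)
The plan is to pass to the canonical surjection $\pi\colon R\to R/Nil(R)$ and deduce the $\overline{S}$-Laskerian property of $R/Nil(R)$ from the nonnil-$S$-Laskerian property of $R$ together with the hypothesis that $Nil(R)$ is $S$-decomposable. First I would record the standard facts that $R/Nil(R)$ is reduced, that every ideal of $R/Nil(R)$ has the form $J/Nil(R)$ for a unique ideal $J\supseteq Nil(R)$ of $R$, and that $J/Nil(R)$ is disjoint from $\overline{S}$ if and only if $J\cap S=\emptyset$ (if $s\in J\cap S$ then $s+Nil(R)\in (J/Nil(R))\cap\overline{S}$, and the converse is immediate). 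Since $Nil(R)$ is $S$-decomposable, Definition~\ref{dec} forces $Nil(R)\cap S=\emptyset$, so $0\notin\overline{S}$ and the statement is not vacuous.

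Next, let $J/Nil(R)$ be a proper ideal of $R/Nil(R)$ disjoint from $\overline{S}$, with $Nil(R)\subseteq J$ and $J\cap S=\emptyset$, and I would split into two cases. If $J=Nil(R)$, so that $J/Nil(R)=(0)$, I invoke the hypothesis to write $Nil(R)=\bigcap_{i=1}^{m}Q_{i}$ with each $Q_{i}$ an $S$-primary ideal of $R$. If instead $J\supsetneq Nil(R)$, then $J$ is a nonnil ideal of $R$ disjoint from $S$, so the nonnil-$S$-Laskerian hypothesis gives $J=\bigcap_{i=1}^{n}Q_{i}$ with each $Q_{i}$ a $P_{i}$-$S$-primary ideal of $R$. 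In both cases the target ideal is exhibited as a finite intersection of $S$-primary ideals of $R$.

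The only real subtlety — and the point I would take care over — is that to push the decomposition down to $R/Nil(R)$ each component $Q_{i}$ must contain $Nil(R)$, so that $Q_{i}/Nil(R)$ is defined. This is automatic: each $Q_{i}$ contains the intersection $\bigcap_{j}Q_{j}$, which is $Nil(R)$ in the first case and $J\supseteq Nil(R)$ in the second, so $Nil(R)\subseteq Q_{i}$ for every $i$. Consequently $J/Nil(R)=\bigl(\bigcap_{i}Q_{i}\bigr)/Nil(R)=\bigcap_{i}\bigl(Q_{i}/Nil(R)\bigr)$, and by \cite[Proposition 2.10]{me22} each $Q_{i}/Nil(R)$ is $\overline{S}$-primary in $R/Nil(R)$. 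Hence $J/Nil(R)$ is $\overline{S}$-decomposable, and since it was an arbitrary proper ideal of $R/Nil(R)$ disjoint from $\overline{S}$, we conclude that $R/Nil(R)$ is $\overline{S}$-Laskerian.
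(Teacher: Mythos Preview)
Your proof is correct and follows essentially the same route as the paper: reduce to an ideal $J\supseteq Nil(R)$ of $R$ disjoint from $S$, treat the cases $J=Nil(R)$ and $J\supsetneq Nil(R)$ via the two hypotheses, and push the resulting $S$-primary decomposition through the quotient using \cite[Proposition~2.10]{me22}. Your explicit observation that $Nil(R)\subseteq\bigcap_{j}Q_{j}\subseteq Q_{i}$, which justifies forming $Q_{i}/Nil(R)$ and the identity $\bigl(\bigcap_{i}Q_{i}\bigr)/Nil(R)=\bigcap_{i}\bigl(Q_{i}/Nil(R)\bigr)$, is in fact cleaner than the paper's element-chasing verification of the same equality.
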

	\begin{proof}
		Let $J$ be an ideal (disjoint from $\overline{S}$) of $R / \text{Nil}(R)$. If $J=0$, then it is $S$-decomposable since $Nil(R)$ is $S$-decomposable. Now, let $J\neq 0$. Then $J = I / \text{Nil}(R)$ for some proper ideal $I$ of $R$ containing ${Nil}(R)$. If there exists $s\in I\cap S$, then $s+Nil(R)\in I/Nil(R)=J$, a contradiction since $J$ is disjoint from $\overline{S}$. This indicates that $I$ is disjoint from $S$. Also, if $I\subseteq Nil(R)$, then $I=Nil(R)$, and so $J=0$, a contradiction. Since $R$ is nonnil-$S$-Laskerian, $I$ is $S$-decomposable, i.e., $I=\bigcap\limits_{i=1}^n Q_i $, where each $Q_{i}$ is $P_{i}$-$S$-primary. This concludes that $J = \left(\bigcap\limits_{i=1}^n Q_i \right)/ \text{Nil}(R)$. For each $i=1,\ldots, n$, by \cite[Proposition 2.10]{me22}, $Q_{i}/Nil(R)$ is $\overline{S}$-primary. Now, we show that $J = \bigcap\limits_{i=1}^n \left(Q_i / \text{Nil}(R)\right)$. It is easy to see that $J \subseteq \bigcap\limits_{i=1}^n \left(Q_i / \text{Nil}(R)\right)$. Now, let $y \in\bigcap\limits_{i=1}^n \left(Q_i / \text{Nil}(R)\right)$. If $y=0$, then $y\in J$. Suppose $y\neq 0$, then for all $i \in \{1, 2, \dots, n\}$, there exists an $x_i \in Q_i \setminus \text{Nil}(R)$ such that $y = x_i + \text{Nil}(R)$. Thus for all $i \in \{2, \dots, n\}$, there exists a $w_i \in \text{Nil}(R)$ such that $x_1 = w_i + x_i$. But $w_i \in \text{Nil}(R) \subset Q_i$, for all $i \in \{2, \dots, n\}$, so $x_1 \in \bigcap\limits_{i=1}^n Q_i$.  Hence $y=x_{1}+Nil(R)\in \left(\bigcap\limits_{i=1}^n Q_i\right)/Nil(R)=J$, as desired.
	\end{proof}

 Next, we prove the converse of Proposition~\ref{las} under the condition \( \text{Nil}(R) \) is divided.

	\begin{prop}\label{nil}
		Let $R$ be a ring with a divided nilradical. If $R/\text{Nil}(R)$ is an $\overline{S}$-Laskerian ring, where $\overline{S}=\{s+Nil(R)\mid s\in S\}$ is a multiplicative closed subset of $R/Nil(R)$, then $R$ is a  nonnil-$S$-Laskerian ring.
	\end{prop}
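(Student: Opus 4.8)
The plan is to reduce an $S$-primary decomposition of a nonnil ideal of $R$ to an $\overline{S}$-primary decomposition in $\overline{R}:=R/\text{Nil}(R)$, exploiting the divided hypothesis. The first step is to record the key structural consequence: if $I$ is a nonnil ideal of $R$, then $\text{Nil}(R)\subseteq I$. Indeed, pick $a\in I\setminus\text{Nil}(R)$; since $\text{Nil}(R)$ is divided, $\text{Nil}(R)\subset (a)\subseteq I$. Moreover, any nonnil ideal disjoint from $S$ is automatically proper, since $S\neq\emptyset$. So fix a nonnil ideal $I$ of $R$ with $I\cap S=\emptyset$; then $\text{Nil}(R)\subseteq I\subsetneq R$.

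Next I would pass to the quotient. Put $\overline{I}=I/\text{Nil}(R)$, a proper ideal of $\overline{R}$. I claim $\overline{I}\cap\overline{S}=\emptyset$: if $s+\text{Nil}(R)\in\overline{I}$ with $s\in S$, then $s\in I+\text{Nil}(R)=I$ (using $\text{Nil}(R)\subseteq I$), contradicting $I\cap S=\emptyset$. Since $R/\text{Nil}(R)$ is $\overline{S}$-Laskerian, $\overline{I}$ admits an $\overline{S}$-primary decomposition $\overline{I}=\bigcap_{i=1}^{n}\overline{Q_{i}}$ with each $\overline{Q_{i}}$ an $\overline{S}$-primary ideal of $\overline{R}$. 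By the correspondence theorem, write $\overline{Q_{i}}=Q_{i}/\text{Nil}(R)$ where each $Q_{i}$ is a proper ideal of $R$ with $\text{Nil}(R)\subseteq Q_{i}$.

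Then I would verify two things. First, $I=\bigcap_{i=1}^{n}Q_{i}$: since each $Q_{i}$ contains $\text{Nil}(R)$, the correspondence $J\leftrightarrow J/\text{Nil}(R)$ preserves intersections, so $\big(\bigcap_{i}Q_{i}\big)/\text{Nil}(R)=\bigcap_{i}\overline{Q_{i}}=\overline{I}=I/\text{Nil}(R)$; as $\bigcap_{i}Q_{i}$ and $I$ both contain $\text{Nil}(R)$ and have the same image in $\overline{R}$, they are equal. Second, each $Q_{i}$ is $S$-primary in $R$: note $\text{rad}(\overline{Q_{i}})=\text{rad}(Q_{i})/\text{Nil}(R)$ (valid because $\text{Nil}(R)\subseteq\text{rad}(Q_{i})$), and take $t_{i}\in S$ with $t_{i}+\text{Nil}(R)$ realizing the $\overline{S}$-primary property of $\overline{Q_{i}}$; if $ab\in Q_{i}$ then $\overline{ab}\in\overline{Q_{i}}$, so either $\overline{t_{i}a}\in\overline{Q_{i}}$, giving $t_{i}a\in Q_{i}+\text{Nil}(R)=Q_{i}$, or $\overline{t_{i}b}\in\text{rad}(\overline{Q_{i}})$, giving $t_{i}b\in\text{rad}(Q_{i})$; and $Q_{i}\cap S=\emptyset$ follows from $\overline{Q_{i}}\cap\overline{S}=\emptyset$. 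Hence $I=\bigcap_{i=1}^{n}Q_{i}$ is an $S$-primary decomposition, so $I$ is $S$-decomposable and $R$ is nonnil-$S$-Laskerian.

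The main obstacle — really the only nontrivial point — is checking that $\overline{S}$-primariness of $\overline{Q_{i}}$ pulls back to $S$-primariness of $Q_{i}$, which is the converse of the implication invoked in Proposition~\ref{las} via \cite[Proposition 2.10]{me22}. This hinges on the identities $Q_{i}+\text{Nil}(R)=Q_{i}$ and $\text{rad}(\overline{Q_{i}})=\text{rad}(Q_{i})/\text{Nil}(R)$, both guaranteed once the divided hypothesis forces $\text{Nil}(R)\subseteq Q_{i}$; the rest is routine correspondence-theorem bookkeeping.
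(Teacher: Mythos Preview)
Your proof is correct and follows essentially the same route as the paper: use the divided hypothesis to get $\mathrm{Nil}(R)\subseteq I$, pass to $\overline{I}=I/\mathrm{Nil}(R)$, check $\overline{I}\cap\overline{S}=\emptyset$, pull an $\overline{S}$-primary decomposition back along the correspondence, and conclude $I=\bigcap_i Q_i$. The one place you are more careful than the paper is in explicitly verifying that each preimage $Q_i$ is $S$-primary in $R$; the paper simply asserts this when it writes ``where each $Q_i$ is a $P_i$-$S$-primary ideal of $R$'', whereas you supply the argument via $Q_i+\mathrm{Nil}(R)=Q_i$ and $\mathrm{rad}(\overline{Q_i})=\mathrm{rad}(Q_i)/\mathrm{Nil}(R)$.
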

	\begin{proof}
		Let $I$ be a nonnil ideal of $R$ disjoint from $S$. Then there exists $y\in I\setminus Nil(R)$ such that $Nil(R)\subset yR\subseteq I$ since $Nil(R)$ is divided. This implies that $J = I/\text{Nil}(R)$ is an ideal of $R/Nil(R)$. If $J\cap\overline{S}\neq\emptyset$, then $s+Nil(R)=i+Nil(R)$ for some $s\in S$ and $i\in I$. Consequently,  $s-i\in Nil(R)\subseteq I$, and so $s\in I$, a contradiction as $I\cap S=\emptyset$. Thus $J$ is disjoint from $\overline{S}$. Now, since $R/\text{Nil}(R)$ is $\overline{S}$-Laskerian,  $J$ is $\overline{S}$-decomposable. Write $J = \bigcap_{i=1}^n (Q_i /\text{Nil}(R))=\left(\bigcap\limits_{i=1}^n Q_i\right)/Nil(R)$, where each $Q_i$ is a $P_{i}$-$S$-primary ideal of $R$. This implies that $I\subseteq  \bigcap\limits_{i=1}^n Q_i$. For reverse containment, let  $x\in  \bigcap\limits_{i=1}^n Q_i$. Then $x+Nil(R)\in J$, and so we can write  $x+Nil(R)=a+Nil(R)$ for some $a\in I$. Consequently, $x\in Nil(R)\subseteq I$. Therefore $I=\bigcap\limits_{i=1}^n Q_i$, as required. 
	\end{proof}

The condition  \textquotedblleft divided $Nil(R)$ \textquotedblright~ is necessary in Proposition \ref{nil}. For intance, let $R=\mathbb{Z}(+)\mathbb{Q}$ be the idealization of $\mathbb{Q}$ as a $\mathbb{Z}$-module and $A=R[X]$. Then, by \cite[Example 4.8]{me22}, $A=R[X]$ is not a nonnil-$S$-Laskerian ring  but $A/Nil(A)$ is an $\overline{S}$-Laskerian for $S=\{1\}$, and $Nil(A) = \left((0)(+)\mathbb{Q}\right)[X]$ is not a divided ideal. 

	\begin{prop}\label{primed}
		Let $R$ be a ring such that $Nil(R)$ is a prime divided ideal disjoint from $S$, and let $I$ be a nil ideal of $R$. Then $I$ is $S$-decomposable if and only if $I$ is $S$-primary. In particular, $R$ is $S$-Laskerian if and only if $R$ is  nonnil-$S$-Laskerian and every ideal contained in $Nil(R)$ is Nil(R)-$S$-primary.
	\end{prop}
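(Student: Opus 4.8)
The plan is to establish the first equivalence and then read off the ``in particular'' statement. Throughout I would use that a nil ideal $I$ (i.e. $I\subseteq Nil(R)$) is automatically disjoint from $S$ and satisfies $rad(I)=Nil(R)$, since $(0)\subseteq I\subseteq Nil(R)$ and $Nil(R)$ is a radical ideal; and that the prime ideal $Nil(R)$, being disjoint from $S$, is $S$-prime, hence $Nil(R)$-$S$-primary. The backward implication of the first equivalence is then immediate: if $I\subseteq Nil(R)$ is $S$-primary, it is a one-term intersection of $S$-primary ideals and $I\cap S=\emptyset$, so $I$ is $S$-decomposable.

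For the forward implication, suppose $I$ is nil with $I=\bigcap_{i=1}^{n}Q_{i}$, each $Q_{i}$ being $P_{i}$-$S$-primary. The key observation is that a divided ideal is comparable to every ideal of $R$: if $K\nsubseteq Nil(R)$, choosing $a\in K\setminus Nil(R)$ gives $Nil(R)\subseteq aR\subseteq K$. Hence for each $i$ either $Q_{i}\subseteq Nil(R)$ or $Nil(R)\subseteq Q_{i}$. Set $A=\{i\mid Q_{i}\subseteq Nil(R)\}$ and $B=\{1,\dots,n\}\setminus A$, so $Nil(R)\subseteq Q_{i}$ for $i\in B$. If $A=\emptyset$, then $I=\bigcap_{i\in B}Q_{i}\supseteq Nil(R)$, which together with $I\subseteq Nil(R)$ forces $I=Nil(R)$, and we are done. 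If $A\neq\emptyset$, then for $i\in A$ we have $Nil(R)=rad((0))\subseteq rad(Q_{i})\subseteq rad(Nil(R))=Nil(R)$, so $P_{i}=Nil(R)$; moreover $\bigcap_{i\in A}Q_{i}\subseteq Nil(R)\subseteq\bigcap_{i\in B}Q_{i}$, whence $I=\bigcap_{i\in A}Q_{i}$ is a finite intersection of $Nil(R)$-$S$-primary ideals, and Proposition~\ref{ry}(1) yields that $I$ is $Nil(R)$-$S$-primary, in particular $S$-primary.

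For the ``in particular'' part: if $R$ is $S$-Laskerian, restricting the defining property to nonnil ideals shows $R$ is nonnil-$S$-Laskerian, and any ideal $I\subseteq Nil(R)$ is disjoint from $S$, hence $S$-decomposable, hence $S$-primary by the first equivalence, and since $rad(I)=Nil(R)$ it is $Nil(R)$-$S$-primary. Conversely, if $R$ is nonnil-$S$-Laskerian and every ideal contained in $Nil(R)$ is $Nil(R)$-$S$-primary, then an arbitrary ideal $I$ disjoint from $S$ is $S$-decomposable --- by the nonnil-$S$-Laskerian hypothesis if $I$ is nonnil, and because $I$ is $S$-primary by assumption if $I\subseteq Nil(R)$ --- so $R$ is $S$-Laskerian.

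I expect the only genuine subtlety to be the bookkeeping in the ``mixed'' case of the forward implication: splitting the $S$-primary decomposition according to comparability with $Nil(R)$ and discarding the components that contain $Nil(R)$ as redundant. This is precisely where the ``divided'' hypothesis is essential --- without it $Nil(R)$ need not be comparable to the $Q_{i}$, the reduction to components with radical $Nil(R)$ fails, and Proposition~\ref{ry}(1) cannot be applied.
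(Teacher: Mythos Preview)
Your proof is correct. The forward implication, however, follows a genuinely different route from the paper's.

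The paper first passes to a \emph{minimal} $S$-primary decomposition (as constructed in Remark~\ref{11}) and works throughout with the saturations $S(Q_i)$, $S(P_i)$. From $\bigcap_i S(P_i)=S(Nil(R))$ and primeness of $Nil(R)$ it locates an index $i_0$ with $S(P_{i_0})=S(Nil(R))$, then uses the two minimality conditions in Definition~\ref{dec} twice: first to rule out any nonnil component $Q_j$ (since otherwise $S(Q_{i_0})\subseteq S(Nil(R))\subset S(Q_j)$ violates condition~(2)), and second to force all remaining components to coincide (since they all have $S(P_j)=S(Nil(R))$, violating condition~(1) unless $n=1$). The conclusion is that the minimal decomposition has a single term, namely $I=Q_{i_0}$.

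Your argument bypasses both the minimal-decomposition machinery and the saturation operator: you take an arbitrary $S$-primary decomposition, use the divided hypothesis directly to make $Nil(R)$ comparable with each $Q_i$, discard the components containing $Nil(R)$ as redundant, and then invoke Proposition~\ref{ry}(1) to conclude that the surviving intersection of $Nil(R)$-$S$-primary ideals is itself $Nil(R)$-$S$-primary. This is shorter and more self-contained; the price is that you do not get the slightly sharper statement that $I$ is literally one of the original $Q_i$'s. For the proposition as stated, either approach suffices, and your use of the divided hypothesis is in fact more transparent.
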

	\begin{proof}
	Suppose $I$ is $S$-decomposable. Let $I=\bigcap\limits_{i=1}^{n} Q_{i}$ be a minimal $S$-primary decomposition of $I$, where each $Q_{i}$ is $P_{i}$-$S$-primary. If $\text{Nil}(R) = (0)$, then $I=Nil(R)=(0)$ since $I$ is a nil ideal. By hypothesis, $Nil(R)$ is a prime ideal, and hence $I$ is $S$-primary. Now, suppose $\text{Nil}(R)\neq (0)$. Since $I=\bigcap\limits_{i=1}^{n} Q_{i}$,  $\text{rad}(S(I))= S(\text{Nil}(R))=\bigcap\limits_{i=1}^{n} \text{rad}(S(Q_{i})) = \bigcap\limits_{i=1}^{n} S(P_{i})$. This implies that $S(\text{Nil}(R))=S(P_{i_{0}})$ for some $i_{0}$ as $Nil(R)$ is a prime ideal. Evidently, $Q_{i_{0}}$ is a nil ideal. Now, we claim that each $Q_{i}$ is a nil ideal. Contrary, suppose there exists \(j \in \{1, \ldots, n\} \setminus \{i_0\}\) such that $Q_j$ is nonnil. This implies that $Nil(R)\subset Q_{j}$, and so $S(\text{Nil}(R)) \subset S(Q_{j})$. Consequently, $S(Q_{i_{0}}) \subseteq S(P_{i_{0}}) = S(\text{Nil}(R)) \subset S(Q_{j})$, which contradicts the minimality of $S$-decomposition. Hence each $Q_i$ is a nil ideal of $R$. Now, we show that $Q_{j}=Q_{i_{0}}$ for each $j$. Suppose $Q_{i_{0}}\neq Q_{j}$ for some $j \neq i_{0}$. Since $Q_{i_{0}}$ and $Q_{j}$ are both contained in $\text{Nil}(R)$, we have  $S(P_{i_{0}}) = S(P_{j}) = S(Nil(R))$. This contradicts the first condition of Definition \ref{dec}(1). Therefore $I = Q_{i_{0}}$, i.e.,  $I$ is $S$-primary. Conversely, if $I$ is $S$-primary, then $I$ is $S$-decomposable. The second statement follows from the first.
	\end{proof}
	Recall from \cite{ah18}, an ideal $I$ of $R$ is called \textit{radically $S$-finite} if there exist $s\in S$ and a finitely generated ideal $J$ of $R$ such that $sI \subseteq \sqrt{J} \subseteq \sqrt{I}$. Every prime ideal of $R$ is radically $S$-finite if and only if  $R$ satisfies  the \emph{$S$-Noetherian spectrum property} (see \cite[Theorem 2.2.]{ah18}).
	
	 In \cite[Proposition 2.11]{sm22}, it was shown that if $R$ is a nonnil-Laskerian ring with decomposable nilradical $Nil(R)$, then $R$ has a Noetherian spectrum. In the following theorem, we extend this result for nonnil-$S$-Laskerian rings. 
	\begin{theorem}\label{spectrum}
		If $R$ is a  nonnil-$S$-Laskerian ring with $S$-decomposable nilradical $Nil(R)$, then it has $S$-Noetherian spectrum.
	\end{theorem}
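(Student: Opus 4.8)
The goal is to show that a nonnil-$S$-Laskerian ring $R$ with $S$-decomposable nilradical has $S$-Noetherian spectrum, which by \cite[Theorem 2.2]{ah18} is equivalent to showing that every prime ideal of $R$ is radically $S$-finite. The plan is to split the argument according to whether a given prime $P$ is nonnil or is contained in $\mathrm{Nil}(R)$, and to handle the two cases separately.

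First I would dispose of the easy case. If $P$ is a prime ideal with $P \subseteq \mathrm{Nil}(R)$, then in fact $P = \mathrm{Nil}(R)$ (since $\mathrm{Nil}(R)$ is the intersection of all minimal primes, and is itself contained in every prime, forcing $P=\mathrm{Nil}(R)$ here). By hypothesis $\mathrm{Nil}(R)$ is $S$-decomposable, say $\mathrm{Nil}(R) = \bigcap_{i=1}^n Q_i$ with each $Q_i$ being $P_i$-$S$-primary; taking radicals and using $\mathrm{rad}(\mathrm{Nil}(R)) = \mathrm{Nil}(R)$ gives $\mathrm{Nil}(R) = \bigcap_{i=1}^n P_i$. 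I would then observe (as in Remark~\ref{11}) that each $(Q_i : s_i)$ is $(P_i:s_i)$-primary, and after collecting the $s_i$ into a single $s \in S$ conclude that $S(\mathrm{Nil}(R)) = \bigcap_{i=1}^n S(P_i)$ with finitely many distinct $S(P_i)$; since $\mathrm{Nil}(R)$ is prime, $S(\mathrm{Nil}(R))$ equals one of the $S(P_i)$, which is enough to extract the radical $S$-finiteness of $\mathrm{Nil}(R)$.

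For the main case, let $P$ be a nonnil prime disjoint from $S$ (if $P \cap S \neq \emptyset$ there is nothing to check, since then $S^{-1}P = S^{-1}R$ and one can take $J = R$, or argue $P$ is radically $S$-finite trivially). Since $R$ is nonnil-$S$-Laskerian, $P$ is $S$-decomposable; write $P = \bigcap_{i=1}^n Q_i$ with $Q_i$ being $P_i$-$S$-primary. Pass to the localization: $S^{-1}P = \bigcap_{i=1}^n S^{-1}Q_i$ with $S^{-1}Q_i$ being $S^{-1}P_i$-primary by \cite[Proposition 2.7]{me22}, so $S^{-1}P$ is primary; but $S^{-1}P$ is also prime (localization of a prime disjoint from $S$), hence $S^{-1}P = S^{-1}P_i$ for some $i$, equivalently $S(P) = S(P_i)$. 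The key point is then that, choosing $s\in S$ witnessing the $S$-primary property of this $Q_i$, we have $Q_i \subseteq \mathrm{rad}(Q_i) = P_i$ and $sP_i$ is controlled: I want to produce a finitely generated $J$ with $sP \subseteq \sqrt{J} \subseteq \sqrt{P}$. To get finite generation I would exploit the $S$-decomposability once more — the ideals $I+Rs$ appearing in Remark~\ref{11} are finitely generated modulo the relevant pieces, but more directly, since $S(P) = S(P_i) = \mathrm{rad}(S(Q_i))$ and $S(Q_i) = (Q_i : s)$, the ideal $(Q_i:s)$ has the same radical as $(P:s) \supseteq sP$ — so I can try $J$ generated by finitely many elements whose radical sits between $sP$ and $P$; the natural candidate comes from noting $\sqrt{(Q_i:s)} = (P:s)$ and that $s\cdot(P:s)\subseteq P$, combined with an induction on the number of primary components or a direct extraction of a finite generating set from the primary structure of $S^{-1}P_i$ in the Noetherian-spectrum sense.

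The main obstacle I anticipate is precisely this last step: converting $S$-decomposability of a nonnil prime $P$ into genuine radical $S$-finiteness of $P$, i.e.\ producing the explicit finitely generated ideal $J$ with $sP \subseteq \sqrt J \subseteq \sqrt P$. $S$-primary decomposition a priori only gives a finite intersection of $S$-primary ideals, with no finiteness of generators built in, so the finiteness must be squeezed out of the interplay between the radicals $P_i = \mathrm{rad}(Q_i)$ and the single prime $P$ — the fact that $S(P)$ coincides with $S(P_i)$ for one index $i$ is what breaks the symmetry and should let one reduce to controlling a single $S$-primary ideal $Q_i$, whose radical is $P_i$ with $S(P_i) = S(P)$; then $\sqrt{Q_i} = P_i$ together with $s P_i \subseteq$ (something finitely generated inside $P_i$) needs to be arranged, likely by an inductive argument on the chain of primes over $Q_i$ or by invoking that an $S$-primary ideal whose radical localizes to a Noetherian-spectrum prime is radically $S$-finite. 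I would budget most of the write-up for making this reduction airtight, and keep the nilradical case and the $P\cap S\neq\emptyset$ case as short remarks.
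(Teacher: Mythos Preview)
Your case split into nil and nonnil primes is the right starting point, but both branches are handled suboptimally, and the nonnil branch has a genuine gap that you yourself flag.

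For the nil case you work far too hard. If $P\subseteq \mathrm{Nil}(R)$ then, since every prime contains $\mathrm{Nil}(R)$, we get $P=\mathrm{Nil}(R)=\sqrt{(0)}$; hence with $J=(0)$ and $s=1$ we have $sP\subseteq\sqrt{J}\subseteq\sqrt{P}$, so $P$ is radically $S$-finite immediately. No use of the $S$-decomposition of $\mathrm{Nil}(R)$ is needed here. (The hypothesis that $\mathrm{Nil}(R)$ is $S$-decomposable is used elsewhere, to make Proposition~\ref{las} applicable.)

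For the nonnil case your plan does not close. Writing $P=\bigcap_{i=1}^n Q_i$ with $Q_i$ $P_i$-$S$-primary and identifying an index with $S(P_i)=S(P)$ is fine, but none of this produces a \emph{finitely generated} ideal $J$ with $sP\subseteq\sqrt{J}\subseteq P$: an $S$-primary decomposition carries no finiteness of generators, and neither the colon ideals $(Q_i:s)$ nor the ideals $I+Rs$ of Remark~\ref{11} are finitely generated in general. The vague appeals to ``an inductive argument on the chain of primes over $Q_i$'' or to ``Noetherian-spectrum primes'' are precisely the missing content --- you would in effect be reproving from scratch that $S$-Laskerian rings have $S$-Noetherian spectrum, and your outline gives no mechanism for doing so.

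The paper avoids this difficulty entirely. It first applies Proposition~\ref{las} (this is where the $S$-decomposability of $\mathrm{Nil}(R)$ enters) to conclude that $R/\mathrm{Nil}(R)$ is $\overline{S}$-Laskerian, and then invokes the known result \cite[Proposition~3.11]{vs22} that an $S$-Laskerian ring has $S$-Noetherian spectrum. Thus $P/\mathrm{Nil}(R)$ is radically $\overline{S}$-finite in $R/\mathrm{Nil}(R)$, and one lifts the finitely many generators (and the witness $\overline{s}$) back to $R$, checking directly that $sP\subseteq\sqrt{I}\subseteq P$ for the lifted finitely generated ideal $I$. The whole point is to reduce to the reduced quotient, where ``nonnil-$S$-Laskerian'' becomes ``$S$-Laskerian'' and the cited result applies; your direct approach forgoes this reduction and is left with nowhere to find the finite generating set.
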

	\begin{proof}
		Let $P$ be a nonzero prime ideal of $R$. If $Nil(R)$ is prime and $P \subseteq Nil(R)$, then $rad(0) = Nil(R)\subseteq P \subseteq Nil(R)$, which implies that $P = Nil(R) = rad(0)$. Hence $P$ is radically $S$-finite. Now, suppose $P$ is a nonnil prime ideal of $R$. By Proposition \ref{las}, $R/Nil(R)$ is $\overline{S}$-Laskerian, and so by \cite[Proposition 3.11]{vs22}, $R/Nil(R)$ has an $\overline{S}$-Noetherian spectrum. Consequently, the nonzero prime ideal $J = P/Nil(R)$ of $R/Nil(R)$ is radically $\overline{S}$-finite. This implies that there exist $\bar{s}\in\overline{S}$ and a finitely generated ideal $K$ of $R/Nil(R)$ such that  $\bar{s}J\subseteq rad(K)\subseteq J$. Suppose $K=(\bar{x}_{1},  \ldots, \bar{x}_{n})$, for some $\bar{x}_{1}, \ldots, \bar{x}_{n}\in R/Nil(R)$. Consider an ideal  $I=(x_{1},\ldots, x_{n})$ of $R$. Now, we prove $sP\subseteq rad(I)\subseteq P$ for some $s\in S$. For this, let $x\in P$. Then $\bar{x}\in J$, and so $\bar{s}\bar{x}\in \bar{s}J\subseteq rad(K)$. This implies that $(\bar{s}\bar{x})^{k}=\bar{a}_{1}\bar{x}_{1}+\cdots+\bar{a}_{n}\bar{x}_{n}$, for some $\bar{a}_{1}, \ldots, \bar{a}_{n}\in R/Nil(R)$ and $k\in\mathbb{N}$. Then there exists $y\in Nil(R)$ such that $(sx)^{k}=a_{1}x_{1}+\cdots+a_{n}x_{n}+y$. Let $n$ be a positive integer such that $y^{n}=0$. Then, $(sx)^{kn}\in I$ and therefore $sP\subseteq rad(I)$. Further, let $z\in rad(I)$. Then $z^{m}=\alpha_{1}x_{1}+\cdots+\alpha_{n}x_{n}$, for some $\alpha_{1},\ldots, \alpha_{n}\in R$ and $m\in\mathbb{N}$. Consequently, $ \bar{z}^{m}=\bar{\alpha}_{1}\bar{x}_{1}+\cdots+\bar{\alpha}_{n}\bar{x}_{n}\in K$, and so $\bar{z}\in rad(K)\subseteq J=P/Nil(R)$. This implies that $z\in P$, and thus $sP\subseteq rad(I)\subseteq P$, as desired.
	\end{proof}
	
	\begin{prop}\label{lask}
		Let $R$ be a nonnil-$S$-Laskerian ring. If $I$ is a nonnil ideal of $R$, then $R/I$ is an $\overline{S}$-Laskerian ring.
	\end{prop}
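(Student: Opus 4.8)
The plan is to exploit the standard lattice isomorphism between ideals of $R/I$ and ideals of $R$ containing $I$. Given an ideal $\overline J$ of $R/I$ with $\overline J\cap\overline S=\emptyset$ (if there is no such ideal there is nothing to prove, and the existence of one already forces $I\cap S=\emptyset$), I would write $\overline J=J/I$ for the unique ideal $J$ of $R$ with $I\subseteq J\subsetneq R$. The goal is then to produce an $S$-primary decomposition of $J$ in $R$ and push it down coordinatewise to $R/I$.

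First I would verify that $J$ is a nonnil ideal of $R$ disjoint from $S$: it is nonnil because $J\supseteq I$ and $I$ is nonnil, and $J\cap S=\emptyset$ because $s\in J\cap S$ would give $s+I\in(J/I)\cap\overline S=\overline J\cap\overline S$, a contradiction. Since $R$ is nonnil-$S$-Laskerian, $J$ is $S$-decomposable, say $J=\bigcap_{i=1}^{n}Q_{i}$ with each $Q_{i}$ a $P_{i}$-$S$-primary ideal of $R$. The crucial observation — and the reason no extra hypothesis (such as the "divided" condition needed for the converse in Proposition~\ref{nil}) is required here — is that $I\subseteq J=\bigcap_{i=1}^{n}Q_{i}\subseteq Q_{i}$ for every $i$. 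Hence each $Q_{i}/I$ is a genuine ideal of $R/I$, and because all the $Q_{i}$ contain $I$ the intersection descends: $\overline J=J/I=\bigl(\bigcap_{i=1}^{n}Q_{i}\bigr)/I=\bigcap_{i=1}^{n}(Q_{i}/I)$. Moreover $Q_{i}\cap S=\emptyset$ (as $Q_{i}$ is $S$-primary), so $(Q_{i}/I)\cap\overline S=\emptyset$, and \cite[Proposition 2.10]{me22} then shows that each $Q_{i}/I$ is $\overline S$-primary in $R/I$. Thus $\overline J$ is a finite intersection of $\overline S$-primary ideals, i.e.\ $\overline S$-decomposable, and $R/I$ is $\overline S$-Laskerian.

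I do not expect a genuine obstacle: once the containments $I\subseteq Q_{i}$ are noted, everything is formal, the only points needing a line of verification being that $J$ inherits nonnilness and $S$-disjointness from $I$ and $\overline J$, and that $Q_{i}/I$ satisfies the hypotheses of \cite[Proposition 2.10]{me22}. For scrupulousness about degenerate cases one may observe that if $I=R$ the quotient is the zero ring and the statement is vacuous, and if $I\cap S\neq\emptyset$ then no ideal of $R/I$ is disjoint from $\overline S$, so again there is nothing to prove; hence the interesting situation is exactly $I$ a proper nonnil ideal with $I\cap S=\emptyset$, which is handled above.
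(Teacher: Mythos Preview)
Your proof is correct and follows essentially the same approach as the paper: lift an ideal of $R/I$ disjoint from $\overline S$ to a nonnil ideal of $R$ disjoint from $S$, decompose it there, and push the $S$-primary components back down via \cite[Proposition~2.10]{me22}. The only cosmetic difference is that you invoke the inclusion $I\subseteq J\subseteq Q_i$ to justify $(\bigcap_i Q_i)/I=\bigcap_i(Q_i/I)$ directly, whereas the paper spells out the element-chase for this equality; both are equally valid.
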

	\begin{proof}
		Let $R'=R/I$ and  $J$ be an ideal (disjoint from $\overline{S}$) of $R'$. Then $J=K/I$ for some ideal $K\supset I$ of $R$. If $K\subseteq Nil(R)$, then $I\subseteq Nil(R)$, a contradiction that $I$ is nonnil. Therefore $K$ is nonnil ideal of $R$. Also, $K$ is disjoint from $S$ because $J$ is disjoint from $\overline{S}$. Since $R$ is nonnil-$S$-Laskerian, $K=\bigcap\limits_{i=1}^{n}Q_{i}$, where each $Q_{i}$ is $P_{i}$-$S$-primary. This implies that $J=\left(\bigcap\limits_{i=1}^{n}Q_{i}\right)/I$. For each $i=1,\ldots, n$, by \cite[Proposition 2.10]{me22}, $Q_{i}/I$ is $\overline{S}$-primary. Now, we show that $J=\bigcap\limits_{i=1}^n \left(Q_i /I \right)$. It is easy to see that $J \subseteq \bigcap\limits_{i=1}^n \left(Q_i /I\right)$. Let now $y+I \in\bigcap\limits_{i=1}^n \left(Q_i /I\right)$. Then for each $i=1,\ldots, n$, there exists $x_{i}\in Q_{i}$  such that $y+I=x_{i}+I$. Consequently, there exists $a_i \in I$ for all $i \in \{i, \dots, n\}$ such that $y= a_i + x_i$. But $a_i \in I \subset Q_i$, for all $i \in \{1, \dots, n\}$. Hence $y\in \bigcap\limits_{i=1}^n Q_i$ and therefore $y+I\in J$. 
	\end{proof}
	
	Recall $\cite{ab22}$ that an ideal $I$ of $R$ is called a strong finite type (in short, SFT) ideal if there exist a finitely generated ideal $F\subseteq I$ of $R$ and $n\in\mathbb{N}$ such that for each $x\in I$, $x^n \in F$. Eljeri \cite{em18} extended the definition of $SFT$ to $S$-$SFT$ by using a multiplicative closed subset $S$ of $R$. An ideal $I$ of $R$ is called an $S$-strong finite type (in short $S$-SFT) ideal if there exist an $S$-finite ideal $F\subseteq I$ of $R$ and  $n\in\mathbb{N}$ such that for each $x\in I$, $x^n \in F$. Also, $R$ is said to be a $S$-strongly finite type ring (in short $S$-SFT ring ) if all its ideals are $S$-SFT ideals.\\


	\noindent
	Now, we provide an example of an $S$-SFT ideal, which is not $SFT$.

	\begin{eg}
		\noindent
		Consider $R=F[x_1,x_2,\ldots, x_{n}, \ldots]$, the polynomial ring in infinitely many indeterminates over a field $F$. Let $I=(x_1, x_2, \ldots, x_{n}, x_{n+1}, \ldots)$, which is not a finitely generated ideal of $R$. Evidently, $I$ is not an $SFT$ ideal because if it were $SFT$, then there would exist a finitely generated ideal $K$ of the form $(x_1, x_2, \ldots, x_n)$. However, for any $x = x_i \in I$ with $i > n$, we have $x^k \notin K$ for all $k \in \mathbb{N}$. 
		Now, consider the multiplicative set $S=R\setminus\{0\}$. By \cite[Proposition 2(a)]{ad02}, the ring $R$ is an $S$-Noetherian ring. Hence the ideal $I$ is $S$-$SFT$.
	\end{eg}
	
We denote $Spec(R, S)$ by the set of all prime ideals of $R$ disjoint from $S$. The next result provides insights into the formal power series ring in the context of being nonnil-\( S \)-Laskerian.
	\begin{prop}\label{psft}
		Let $R[[X]]$ be the formal power series ring  with $Nil(R[[X]])$ is $S$-decomposable. If $R[[X]]$ is nonnil-$S$-Laskerian, then $R$ has the $S$-SFT property.
	\end{prop}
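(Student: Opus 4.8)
The plan is to reduce the problem to the known characterization of $S$-SFT via the spectrum and then exploit the $S$-Laskerian structure of a suitable quotient. First I would observe that $R[[X]]$ being nonnil-$S$-Laskerian with $S$-decomposable $Nil(R[[X]])$ puts us exactly in the hypotheses of Proposition~\ref{spectrum}, so $R[[X]]$ has $S$-Noetherian spectrum; equivalently (by \cite[Theorem~2.2]{ah18}) every prime ideal of $R[[X]]$ disjoint from $S$ is radically $S$-finite. The strategy is to transfer this to $R$: one wants to show every prime ideal of $R$ is radically $S$-finite, which by the same theorem of \cite{ah18} is equivalent to $R$ having $S$-Noetherian spectrum, and then to upgrade ``$S$-Noetherian spectrum'' to the ``$S$-SFT property'' using the power-series hypothesis. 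The key link here is that for a prime ideal $P$ of $R$, the extension $P + XR[[X]]$ (or $P[[X]]$) is a prime ideal of $R[[X]]$, and radical $S$-finiteness of this extended prime, together with the fact that it meets $R$ in $P$, forces a bound of the required $S$-SFT type on $P$ itself: if $s\,(P+XR[[X]]) \subseteq \sqrt{J}$ for a finitely generated $J \subseteq P + XR[[X]]$, then reading off constant terms and using that the coefficients of the generators of $J$ generate (up to $s$-multiples and radicals) gives a finitely generated $F \subseteq P$ and an exponent $n$ with $(sx)^n \in F$ for all $x \in P$.

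Concretely, the steps I would carry out are: (1) quote Proposition~\ref{spectrum} to get that $R[[X]]$ has $S$-Noetherian spectrum; (2) show that if $R[[X]]$ has $S$-Noetherian spectrum then so does $R$, by pulling a radically $S$-finite bound on the extended prime $\mathfrak{P}$ of $R[[X]]$ back along the retraction $R[[X]] \twoheadrightarrow R$, $X \mapsto 0$ — here one checks that if $F_0 \subseteq \mathfrak{P}$ is finitely generated with $s\mathfrak{P} \subseteq \sqrt{F_0}$, then the image of $F_0$ under $X\mapsto 0$ is a finitely generated ideal $F \subseteq P$ with $sP \subseteq \sqrt{F}$; (3) invoke \cite[Theorem~2.2]{ah18} (or its $S$-SFT refinement) to conclude. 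The final step (3) is where the ``$[[X]]$'' really earns its keep: for ordinary rings, ``SFT $\Leftrightarrow$ Noetherian spectrum'' is false, but for $R$ to appear as a coefficient ring of a power series ring with good spectral behavior is exactly the classical Arnold/Eljeri phenomenon ($R[[X]]$ well-behaved $\Rightarrow$ $R$ is SFT, resp.\ $S$-SFT), so I would cite the $S$-SFT analogue of Arnold's theorem from \cite{em18} to finish: $R[[X]]$ having $S$-Noetherian spectrum implies $R$ is an $S$-SFT ring.

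The main obstacle I anticipate is step (2) together with the bookkeeping in passing from ``radically $S$-finite'' for a single extended prime to a uniform ``$S$-SFT'' bound on $R$ — one must be careful that the finitely generated ideal $J\subseteq \mathfrak{P}$ witnessing radical $S$-finiteness in $R[[X]]$ has generators whose constant-term coefficients (together with the coefficients appearing in the power-series expansions) can be assembled into an $S$-finite ideal of $R$ contained in $P$, and that the exponent coming from $\sqrt{J} \subseteq \mathfrak{P}$ is genuinely uniform over all $x\in P$. The cleanest route is probably to first settle the bookkeeping for $Nil$-free quotients: replace $R$ by $R/Nil(R)$ (which is $S$-Laskerian, by Proposition~\ref{las}, hence whose polynomial/power-series behavior is controlled), prove the $S$-SFT bound there, and then lift, since $Nil(R)$ contributes only nilpotent error terms that are swallowed by raising to a further power. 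I would also keep in mind the possibility that the intended argument bypasses step (2) entirely and instead applies an $S$-Laskerian-to-$S$-SFT implication for power series rings directly (an $S$-analogue of the fact that a Laskerian power series ring over $R$ forces $R$ to be SFT); in the write-up I would state which of these two routes I follow and cite \cite{em18}, \cite{ah18}, and Propositions~\ref{las} and \ref{spectrum} accordingly.
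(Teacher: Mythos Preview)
Your proposal ultimately reaches the same argument as the paper, but with unnecessary scaffolding. The paper's proof is exactly your step~(1) followed by your step~(3): apply Proposition~\ref{spectrum} to conclude that $R[[X]]$ has $S$-Noetherian spectrum, and then invoke Eljeri's result \cite[Theorem~3.8]{em18} (the $S$-analogue of Arnold's theorem you allude to), which says that $R$ is $S$-SFT if and only if $P[[X]]$ is radically $S$-finite in $R[[X]]$ for every $P\in Spec(R,S)$. Since each such $P[[X]]$ is a prime of $R[[X]]$ disjoint from $S$, the $S$-Noetherian spectrum of $R[[X]]$ gives the needed radical $S$-finiteness directly, and you are done. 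The paper phrases this via the contrapositive, but the content is the same.

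Your step~(2), the constant-term bookkeeping, and the detour through $R/Nil(R)$ are all unnecessary. In particular, you correctly diagnose that ``$R$ has $S$-Noetherian spectrum'' alone does \emph{not} imply $R$ is $S$-SFT, so step~(2) by itself buys nothing; the real work happens in $R[[X]]$, not in $R$, and Eljeri's theorem packages this cleanly so that no hand-rolled uniform exponent argument is required. Your ``alternative route'' in the last paragraph is precisely the paper's route --- commit to it and drop the rest.
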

	\begin{proof}
		For all $P \in Spec(R, S)$, $P[[X]]\in Spec(R[[X]], S)$, and then is $S$-radical of $R[[X]]$, by \cite[Remark 3.2(2)]{em18}. Suppose that $R$ is not an $S$-SFT ring. Then  there exists an ideal $P\in Spec(R, S)$ such that $P[[X]]$  is not a $S$-radically finite,  by \cite[Theorem 3.8]{em18}. Thus $R[[X]]$ does not have a $S$-Noetherian spectrum since a ring $R$ has $S$-Noetherian spectrum if each $S$-radical ideal of $R$ is $S$-radically finite, by \cite[Theorem 3.6(2)]{em18}, and therefore $R[[X]]$ is not nonnil-$S$-Laskerian.
	\end{proof}
	\begin{prop}
		Let $R[[X]]$ be a nonnil-$S$-Laskerian. Then $R$ is $S$-Laskerian.
	\end{prop}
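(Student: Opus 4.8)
The plan is to realize $R$ as a quotient of $R[[X]]$ by a nonnil ideal and then invoke Proposition~\ref{lask}. Let $\pi\colon R[[X]]\to R$ be the evaluation-at-zero homomorphism $f\mapsto f(0)$; it induces an isomorphism $R[[X]]/(X)\cong R$. The first thing to verify is that $(X)$ is a nonnil ideal of $R[[X]]$ disjoint from $S$: since $X^{n}\neq 0$ for every $n$ (assuming $R\neq 0$), we have $X\notin Nil(R[[X]])$, so $(X)\nsubseteq Nil(R[[X]])$; and since each $s\in S$ equals its own constant term while every element of $(X)$ has zero constant term, $(X)\cap S=\emptyset$ (when $0\in S$ or $R=0$ the statement is vacuous, so that case can be set aside).

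Next I would apply Proposition~\ref{lask} to the ring $R[[X]]$, which is nonnil-$S$-Laskerian by hypothesis, taking the nonnil ideal to be $(X)$. This gives that $R[[X]]/(X)$ is $\overline{S}$-Laskerian, where $\overline{S}=\{s+(X)\mid s\in S\}$. Transporting this along the isomorphism $R[[X]]/(X)\cong R$ induced by $\pi$ --- which sends $\overline{S}$ onto $S$ and hence carries $\overline{S}$-primary ideals to $S$-primary ideals and $\overline{S}$-decomposable ideals to $S$-decomposable ideals --- yields that $R$ is $S$-Laskerian: any ideal $I$ of $R$ with $I\cap S=\emptyset$ corresponds to an ideal of $R[[X]]/(X)$ disjoint from $\overline{S}$, this ideal is $\overline{S}$-decomposable by the line above, and pushing its decomposition forward through $\pi$ writes $I$ as a finite intersection of $S$-primary ideals of $R$.

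Because this is a short deduction from Proposition~\ref{lask}, I do not expect a serious obstacle; the only points deserving a line of justification are that $X\notin Nil(R[[X]])$ (this is precisely what makes $(X)$ a \emph{nonnil} ideal, so it is where the hypothesis on $R[[X]]$ gets used) and that $\pi$ identifies $\overline{S}$ with $S$. If one prefers an argument not routed through Proposition~\ref{lask}, the same conclusion follows directly: for an ideal $I\subseteq R$ with $I\cap S=\emptyset$, put $K=\pi^{-1}(I)=I+XR[[X]]$, observe that $K$ is nonnil (it contains $X$) and disjoint from $S$, choose a minimal $S$-primary decomposition $K=\bigcap_{i=1}^{n}Q_{i}$ furnished by the nonnil-$S$-Laskerian hypothesis, note that $X\in K\subseteq Q_{i}$ forces $(X)\subseteq Q_{i}$ for each $i$, and conclude $I=\bigcap_{i=1}^{n}\pi(Q_{i})$ with each $\pi(Q_{i})\cong Q_{i}/(X)$ being $S$-primary by \cite[Proposition 2.10]{me22}.
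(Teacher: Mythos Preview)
Your proposal is correct and follows exactly the route the paper takes: apply Proposition~\ref{lask} to the nonnil-$S$-Laskerian ring $R[[X]]$ with the nonnil ideal $XR[[X]]$ and then identify $R[[X]]/(X)$ with $R$. The paper states this in a single line, while you have (helpfully) spelled out the supporting details and even sketched a direct alternative; note that Proposition~\ref{lask} does not require $I\cap S=\emptyset$, so that verification is unnecessary, though harmless.
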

	\begin{proof}
		Applying Proposition \ref{lask} for $R[[X]]$ and the nonnil ideal $XR[[X]]$.
		
	\end{proof}
		Recall \cite{ad02}, an ideal $I$ of $R$ is called $S$-finte if $sI\subseteq J\subseteq I$ for some finitely generated ideal $J$ of $R$ and some $s\in S$. Also, a ring $R$ is said to be an $S$-Noetherian ring if every ideal of $R$ is $S$-finite. Kwon and Lim \cite{mj20} extended the concept of $S$-Noetherian rings to nonnil-$S$-Noetherian rings.
		
	\begin{deff}\cite{ab22}
	A ring $R$ is called \textit{nonnil-$S$-Noetherian} if each nonnil ideal of $R$ is $S$-finite.
	\end{deff}
		 Form now, our aim is to prove that nonnil-$S$-Noetherian rings belong to the class of nonnil-$S$-Laskerian rings. To show this, we require an $S$-version of irreducible ideals.
	
	\begin{deff}\label{S-iir.}
		An ideal $Q$ (disjoint from $S$) of the ring $R$ is called  $S$-irreducible if $s(I\cap J)\subseteq Q \subseteq I\cap J$ for some $s\in S$ and some ideals $I$, $J$ of $R$, then there exists $s'\in S$ such that either $ss'I\subseteq Q$ or $ss'J\subseteq Q$.
	\end{deff}
	
	\noindent
	It is clear from the definition that every irreducible ideal is an $S$-irreducible ideal. However, the following example shows that an $S$-irreducibile ideal need not be irreducible.

	\begin{eg}\label{fm}
		\noindent
		Let $R=\mathbb{Z}$, $S=\mathbb{Z}\setminus 3\mathbb{Z}$ and $I=6\mathbb{Z}$. Since $I=2\mathbb{Z}\cap 3\mathbb{Z}$, therefore $I$ is not an irreducible ideal of $R$. Now, take $s=2\in S$. Then $2(3\mathbb{Z})=6\mathbb{Z}\subseteq I$. Thus $I$ is an $S$-irreducible ideal of $R$.
	\end{eg}
	
	\noindent
	Following \cite{zb17}, 
	let  $E$ be a family of ideals of a ring $R$. An element $I\in E$ is said to be an \textit{$S$-maximal element} of $E$ if there exists an $s\in S$ such that for each $J\in E$, if $I\subseteq J$, then $sJ\subseteq I$. Also a chain of ideals $(I_{i})_{i\in\wedge}$  of $R$ is called \textit{$S$-stationary} if there exist $k\in \wedge$ and $s\in S$ such that  $sI_{i}\subseteq I_{k}$ for all $i\in\wedge$, where $\wedge$ is an arbitrary indexing set. A family $\mathcal{F}$ of ideals of a ring $R$ is said to be $S$-saturated if it satisfies the following property: for every ideal $I$ of $R$, if there exist $s\in S$ and $J\in\mathcal{F}$ such that $sI\subseteq J$, then $I\in\mathcal{F}$.

	\begin{lem}\label{nonnil} The following assertions are equivalent for a ring $R$:
		\begin{enumerate}
			\item[(a)] The ring $R$ is nonnil-$S$-Noetherian.
			\item[(b)] Every ascending chain of nonnil ideals of $R$ is $S$-stationary.
			\item[(c)] Every nonempty $S$-saturated set of nonnil ideals of $R$ has a maximal element.
			\item[(d)] Each nonempty set of nonnil ideals of $R$ has a $S$-maximal element with respect to inclusion.
		\end{enumerate}
	\end{lem}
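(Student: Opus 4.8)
The plan is to run the cycle of implications $(a)\Rightarrow(b)\Rightarrow(c)\Rightarrow(d)\Rightarrow(a)$, adapting the classical Noetherian-type arguments and their $S$-analogues (in the spirit of \cite{zb17}) to the nonnil setting. Two trivial observations will be used repeatedly: an ideal containing a nonnil ideal is again nonnil, and every nonnil ideal $I$ contains a principal nonnil ideal $(a)$ with $a\in I\setminus Nil(R)$. I will understand ``ascending chain'' in (b) to mean a chain of ideals of arbitrary order type, consistent with the ``arbitrary indexing set'' appearing in the definition of $S$-stationary recalled above, and ``$S$-saturated family of nonnil ideals'' to require the saturation condition only for nonnil test ideals.

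For $(a)\Rightarrow(b)$: if $(I_\lambda)$ is an ascending chain of nonnil ideals, then $I=\bigcup_\lambda I_\lambda$ is a nonnil ideal, hence $S$-finite, so $sI\subseteq(a_1,\dots,a_m)\subseteq I$ for some $s\in S$; choosing an index $k$ with $a_1,\dots,a_m\in I_k$ (finitely many elements of a chain), we get $sI_\lambda\subseteq sI\subseteq I_k$ for all $\lambda$, so the chain is $S$-stationary. For $(b)\Rightarrow(c)$: let $\mathcal F$ be a nonempty $S$-saturated family of nonnil ideals, ordered by inclusion, and apply Zorn's lemma. Given a chain $(I_\lambda)$ in $\mathcal F$, by (b) there exist $s\in S$ and an index $k$ with $sI_\lambda\subseteq I_k$ for all $\lambda$; hence its union $U=\bigcup_\lambda I_\lambda$ is a nonnil ideal with $sU\subseteq I_k\in\mathcal F$, so $U\in\mathcal F$ by $S$-saturation, and $U$ is an upper bound of the chain inside $\mathcal F$. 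Zorn then delivers a maximal element of $\mathcal F$.

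The crux is $(c)\Rightarrow(d)$. Given a nonempty family $\mathcal S$ of nonnil ideals, I pass to its $S$-saturation
\[
\mathcal F=\{\,I\ :\ I\text{ is a nonnil ideal and }sI\subseteq J\text{ for some }s\in S,\ J\in\mathcal S\,\},
\]
which is readily checked to be a nonempty $S$-saturated family of nonnil ideals containing $\mathcal S$. By (c), $\mathcal F$ has a maximal element $M$, and from $M\in\mathcal F$ we get $s_0M\subseteq J_0$ for some $s_0\in S$ and $J_0\in\mathcal S$. I claim $J_0$ is an $S$-maximal element of $\mathcal S$ with witness $s_0$. Indeed, $M+J_0$ is a nonnil ideal with $s_0(M+J_0)\subseteq J_0\in\mathcal S$, so $M+J_0\in\mathcal F$, and maximality of $M$ forces $J_0\subseteq M$. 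Then, for any $J\in\mathcal S$ with $J_0\subseteq J$, the ideal $M+J$ is nonnil and satisfies $s_0(M+J)=s_0M+s_0J\subseteq J_0+J=J\in\mathcal S$, so $M+J\in\mathcal F$; maximality of $M$ again gives $J\subseteq M$, whence $s_0J\subseteq s_0M\subseteq J_0$. This is exactly the statement that $J_0$ is $S$-maximal in $\mathcal S$.

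Finally, for $(d)\Rightarrow(a)$: let $I$ be a nonnil ideal and let $\mathcal T$ be the family of all finitely generated nonnil subideals of $I$, which is nonempty since $I$ contains some $(a)$ with $a\notin Nil(R)$. By (d), $\mathcal T$ has an $S$-maximal element $F_0=(a_1,\dots,a_n)$ with witness $s\in S$. For any $x\in I$, the ideal $F_0+(x)$ lies in $\mathcal T$ and contains $F_0$, so $s\big(F_0+(x)\big)\subseteq F_0$; in particular $sx\in F_0$. Hence $sI\subseteq F_0\subseteq I$ with $F_0$ finitely generated, so $I$ is $S$-finite and $R$ is nonnil-$S$-Noetherian. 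The main obstacle I anticipate is $(c)\Rightarrow(d)$: identifying the correct auxiliary family $\mathcal F$ and checking that the sums $M+J$ stay inside $\mathcal F$, so that maximality of $M$ can be leveraged to produce one uniform witness $s_0$; a secondary nuisance is keeping track of the nonnil condition and carrying out the chain-to-union passages in $(a)\Rightarrow(b)$ and $(b)\Rightarrow(c)$ with the general notion of ascending chain.
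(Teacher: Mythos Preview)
Your proof is correct and follows essentially the same cycle of implications and the same key constructions as the paper's own proof: the union argument for $(a)\Rightarrow(b)$, Zorn's lemma for $(b)\Rightarrow(c)$, passage to the $S$-saturation $\mathcal F$ (the paper's $\mathcal D^{S}$) for $(c)\Rightarrow(d)$, and the family of finitely generated nonnil subideals for $(d)\Rightarrow(a)$. Your explicit remark that the $S$-saturation condition should be tested only against nonnil ideals is in fact a helpful clarification the paper leaves implicit, and your extra step $J_0\subseteq M$ in $(c)\Rightarrow(d)$ is harmless though not needed for the remainder of the argument.
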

	
	\begin{proof}
		\leavevmode	
		\begin{enumerate}
			\item $(a)\implies(b)$. Let $(I_n)_{n\in\wedge}$ be an increasing sequence of nonnil ideals of $R$. Define the ideal $I = \bigcup\limits_{n\in\wedge}I_n$. If $I\subseteq Nil(R)$, then $I_{n}\subseteq Nil(R)$ for all $n$, which is not possible since each $I_n$ is nonnil. Thus $I$ is a nonnil ideal of $R$. Also, $I$ is $S$-finite since $R$ is nonnil-$S$-Noetherian. Consequently, there exist a finitely  generated ideal $J\subseteq R$ and $s\in S$ such that $sI\subseteq J\subseteq I$. Since $J$ is finitely generated, there is a $k\in\wedge$ satisfying $J\subseteq I_{k}$. Then we have $sI\subseteq J\subseteq I_{k}$, from which it follows that $sI_n \subseteq I_{k}$ for each $n\in\wedge$.
			
			\item $(b)\implies(c)$. Let $\mathcal{D}$ be an $S$-saturated set of nonnil ideals of $R$. Then given any chain \(\{I_n\}_{n \in\wedge} \subseteq \mathcal{D} \), we claim that \( I = \bigcup\limits_{n\in\wedge} I_n \) belongs to $\mathcal{D}$, which will establish \( I \) as an upper bound for the chain. Indeed, by (b), there exist some \( k \in \wedge \) and \( s \in S \) such that \( s I_n \subseteq I_k \) for every \( n \in \wedge \). Consequently, we obtain $sI = s \left(\bigcup\limits_{n\in\wedge} I_n \right) \subseteq I_k$. Since $\mathcal{D}$ is $S$-saturated, it follows that \( I \in \mathcal{D} \), as required. Applying Zorn's lemma, we conclude that $\mathcal{D}$ has a maximal element.
			
			\item $(c)\implies(d)$. Let $\mathcal{D}$ be a nonempty set of nonnil ideals of $R$. Consider the family $\mathcal{D}^{S}$ of all nonnil ideals $L \subseteq R$ such that there exist some $s \in S$ and $L_0 \in \mathcal{D}$ with $sL \subseteq L_0$. Clearly, $\mathcal{D}\subseteq \mathcal{D}^{S}$, so $\mathcal{D}^{S}\neq\emptyset$. It is straightforward to see that $\mathcal{D}^{S}$ is $S$-saturated. Thus, by (c) implies that $\mathcal{D}^{S}$ has a maximal element $K\in\mathcal{D}^{S}$. Fix $s \in S$ and $J \in \mathcal{D}$ such that $sK\subseteq J$. Now, we claim that $J$ is an $S$-maximal element of $\mathcal{D}$; specifically, given $L\in \mathcal{D}$ with $J\subseteq L$, we show that $sL \subseteq J$. Note that $K + L$ satisfies $s(K + L)= sK + sL \subseteq J + L \subseteq L$, so that $K + L \in \mathcal{D}^{S}$. Also, if $(K+L)\subseteq Nil(R)$, then $K\subseteq Nil(R)$, which is not possible since $K$ is nonnil ideal of $R$. Thus $K+L$ is a nonnil ideal of $R$. Therefore maximality of $K$ implies $K = K + L$, so that $L \subseteq K$. 
			But then $sL \subseteq sK \subseteq J$, as desired.
			
			\item $(d)\implies(a)$. Let $I$ be a nonnil ideal of $R$. Then we will show that $I$ is $S$-finite. Let $\mathcal{D}$ be the family of nonnil finitely generated ideal $J$ of $R$ such that $J\subseteq I$. Choose $x\in I\setminus Nil(R)$. Then $J=(x)\subseteq I$, and $J\nsubseteq Nil(R)$. This implies that $J\in\mathcal{D}$, and so $\mathcal{D}$ is nonempty. Then $\mathcal{D}$ has an $S$-maximal element $K\in\mathcal{D}$. Fixing $x \in I$, take a finitely generated ideal of the form $L= K + xR$. Since $K\subseteq I$ and $x\in I$, so $L\subseteq I$. Consequently, $L\in\mathcal{D}$ such that $K\subseteq L$. This implies that there exists $s\in S$ such that $sL\subseteq K$; in particular, $sx\in K$. This verifies $sI\subseteq K$, so that $I$ is $S$-finite. It follows that $R$ is nonnil-$S$-Noetherian.
		\end{enumerate}
		
	\end{proof}
	
	\noindent	
	The following lemma provides a connection between the concepts of nonnil $S$-irreducible ideals and $S$-primary ideals.
	
	\begin{lem}\label{sp}
		\noindent
		Let R be a nonnil-$S$-Noetherian ring. Then every nonnil $S$-irreducible ideal of $R$ is $S$-primary.
	\end{lem}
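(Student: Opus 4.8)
The plan is to run the classical argument that an irreducible ideal of a Noetherian ring is primary, replacing the ascending chain condition by the $S$-stationarity supplied by Lemma~\ref{nonnil}. The genuinely new feature, and the step I expect to be the real obstacle, is that an $S$-primary ideal requires \emph{one} element $s\in S$ that works simultaneously for all pairs, whereas the naive $S$-ified argument only produces, for each pair $a,b$ with $ab\in Q$, some $s_{a,b}\in S$ with $s_{a,b}a\in Q$ or $s_{a,b}b\in rad(Q)$; collapsing these into a single $s$ is where the nonnil-$S$-Noetherian hypothesis has to be used a second time.

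For the per-pair statement, I would fix $a,b\in R$ with $ab\in Q$. Since $Q$ is nonnil and $Q\subseteq (Q:b^{m})$ for all $m$, the chain $(Q:b)\subseteq (Q:b^{2})\subseteq\cdots$ is an ascending chain of nonnil ideals, so by Lemma~\ref{nonnil} there are $n\ge 1$ and $t\in S$ with $t(Q:b^{n+1})\subseteq (Q:b^{n})$. Put $I=Q+Ra$ and $J=Q+Rb^{n}$; then $Q\subseteq I\cap J$, and a routine computation gives $t(I\cap J)\subseteq Q$: writing $w=q_{1}+r_{1}a=q_{2}+r_{2}b^{n}\in I\cap J$ and multiplying by $b$ yields $wb=q_{1}b+r_{1}ab\in Q$, hence $r_{2}b^{n+1}=wb-q_{2}b\in Q$, hence $r_{2}\in (Q:b^{n+1})$, hence $tr_{2}b^{n}\in Q$, so $tw=tq_{2}+tr_{2}b^{n}\in Q$. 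Applying Definition~\ref{S-iir.} to $t(I\cap J)\subseteq Q\subseteq I\cap J$ gives $s'\in S$ with $ts'I\subseteq Q$ or $ts'J\subseteq Q$. In the first case $ts'a\in Q$; in the second $ts'b^{n}\in Q$, so $(ts'b)^{n}=(ts')^{n-1}(ts'b^{n})\in Q$ and thus $ts'b\in rad(Q)$. So $s_{a,b}:=ts'$ works for the pair $(a,b)$.

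To uniformize, I would apply Lemma~\ref{nonnil}(d) twice. The family $\{(Q:s):s\in S\}$ is a nonempty set of nonnil ideals (each contains $Q$), so it has an $S$-maximal element $(Q:s_{0})$, with witness $u\in S$. For an arbitrary $s\in S$ one has $(Q:s_{0})\subseteq (Q:s_{0}s)$ and $(Q:s)\subseteq (Q:s_{0}s)$, so $S$-maximality gives $u(Q:s_{0}s)\subseteq (Q:s_{0})$ and therefore $u(Q:s)\subseteq (Q:s_{0})$; since $S(Q)=\bigcup_{s\in S}(Q:s)$, this says $uS(Q)\subseteq (Q:s_{0})$. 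The same argument applied to the nonnil ideal $rad(Q)$ produces $s_{1},v\in S$ with $vS(rad(Q))\subseteq (rad(Q):s_{1})$.

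Finally I would set $s:=s_{0}us_{1}v\in S$ and check it is a uniform $S$-primary witness. Given $a,b$ with $ab\in Q$, take $s_{a,b}$ from the second paragraph. If $s_{a,b}a\in Q$ then $a\in (Q:s_{a,b})\subseteq S(Q)$, so $s_{0}ua\in Q$ and hence $sa=(s_{1}v)(s_{0}ua)\in Q$. If instead $s_{a,b}b\in rad(Q)$ then $b\in (rad(Q):s_{a,b})\subseteq S(rad(Q))$, so $s_{1}vb\in rad(Q)$ and hence $sb=(s_{0}u)(s_{1}vb)\in rad(Q)$. Thus $Q$ is $S$-primary; note that $Q$ is proper since $1\in S$ and $Q\cap S=\emptyset$. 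The one point to stay careful about throughout is that every ideal fed to Lemma~\ref{nonnil} contains $Q$ (or equals $rad(Q)$) and so is genuinely nonnil, which is exactly what lets the nonnil-$S$-Noetherian hypothesis be invoked each time.
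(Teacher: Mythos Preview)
Your per-pair argument is essentially the paper's own argument with the roles of $a$ and $b$ exchanged: the paper stabilizes the chain $(Q:a^{n})$ and intersects $I=(a^{k})+Q$ with $J=(b)+Q$, you stabilize $(Q:b^{m})$ and intersect $Q+Ra$ with $Q+Rb^{n}$; the computations verifying $t(I\cap J)\subseteq Q$ and the subsequent appeal to Definition~\ref{S-iir.} are line-by-line the same. So on that level the approaches coincide.

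Where you genuinely diverge is in the uniformization step. The paper's proof, as written, fixes $a,b$ with $ab\in Q$ and $sb\notin Q$ for every $s\in S$, and produces a $t\in S$ (depending on $a,b$) with $ta\in rad(Q)$; it then stops, so strictly speaking it only establishes the ``weak'' condition $\forall\,a,b\,\exists\,s_{a,b}$ rather than the $\exists\,s\,\forall\,a,b$ required by Definition~2.1 of \cite{me22} as quoted at the start of Section~2. Your second application of Lemma~\ref{nonnil}(d) to the families $\{(Q:s):s\in S\}$ and $\{(rad(Q):s):s\in S\}$, yielding $s_{0}u\cdot S(Q)\subseteq Q$ and $s_{1}v\cdot S(rad(Q))\subseteq rad(Q)$ and hence a single witness $s=s_{0}us_{1}v$, is correct and exactly the missing ingredient. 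Thus your argument is not merely an alternative but a completion of the paper's; the extra cost is two more invocations of the nonnil-$S$-Noetherian hypothesis, and what it buys is that the conclusion actually matches the stated definition of $S$-primary.
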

	\begin{proof}
		Suppose $Q$ is a nonnil $S$-irreducible ideal of $R$. Let $a,b\in R$ such that $ab\in Q$ and $sb\notin Q$ for all $s\in S$. Our aim is to show that there exists $t\in S$ such that $ta\in rad(Q)$.
		Consider  $A_n=\{x\in R\hspace{0.2cm}|\hspace{0.1cm} a^{n}x\in Q \}$ for $n\in\mathbb{N}$. Since $Q$ is nonnil, there exists $\alpha\in Q\setminus Nil(R)$.	Clearly, $\alpha\in A_n$ for each $n$ but $\alpha\notin Nil(R)$. Consequently, each $A_{n}$ is a nonnil ideal of $R$ and $A_1\subseteq A_2\subseteq A_3\subseteq \cdots$ is an increasing chain of ideals of $R$. Since $R$ is a nonnil-$S$-Noetherian, by Lemma \ref{nonnil}, this chain is $S$-stationary, i.e., there exist $k\in \mathbb{N}$ and $s\in S$ such that $sA_n\subseteq A_k$ for all $n\geq k$. Consider the two ideals $I=(a^{k}) +\hspace{0.1cm} Q$ and $J=(b) +\hspace{0.1cm} Q$ of $R$. Then  $Q\subseteq I\cap J$. For the reverse containment, let $y\in I\cap J$. Write  $y=a^{k}z+q$ for some $z\in R$ and $q\in Q$. Since $ab\in Q$, $aJ\subseteq Q$; whence $ay\in Q$. Now, $a^{k+1}z=a(a^{k}z)=a(y-q)\in Q$. This implies that $z\in A_{k+1}$, and so  $sz\in sA_{k+1}\subseteq A_k$. Consequently, $a^{k}sz\in Q $ which implies that  $a^{k}sz +sq=sy\in Q$. Thus we have $s(I\cap J)\subseteq Q\subseteq I\cap J$. This implies that there exists $s'\in S$ such that either $ss'I\subseteq Q$ or $ss'J\subseteq Q$ since $Q$ is  $S$-irreducible. If $ss'J\subseteq Q$, then $ss'b\in Q$  which is not possible. Therefore $ss'I\subseteq Q$ which implies that $ss'a^{k}\in Q$. Put $t=ss'\in S$. Then $(ta)^{k} \in Q$ and hence  $ta\in rad(Q)$, as desired.
	\end{proof}
	
	Now, we are in a position to represent nonnil-$S$-Noetherian rings as a class of examples of nonnil-$S$-Laskerian rings.
	\begin{theorem}\label{on}
		Every nonnil-$S$-Noetherian ring is a nonnil-$S$-Laskerian ring.
	\end{theorem}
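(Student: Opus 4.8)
The plan is to run the Lasker--Noether argument in its $S$-version, using Lemma \ref{nonnil} in place of the ascending chain condition and Lemma \ref{sp} in place of ``irreducible implies primary''. So suppose $R$ is nonnil-$S$-Noetherian but not nonnil-$S$-Laskerian, and let $\Sigma$ be the family of all nonnil ideals of $R$ disjoint from $S$ that are not $S$-decomposable; then $\Sigma\neq\emptyset$. By Lemma \ref{nonnil}, $\Sigma$ has an $S$-maximal element $A$, say with associated $s_{0}\in S$, so that $s_{0}L\subseteq A$ for every $L\in\Sigma$ with $A\subseteq L$. First I would observe that $A$ cannot be $S$-irreducible: an $S$-irreducible ideal is disjoint from $S$ by Definition \ref{S-iir.} and $A$ is nonnil, so Lemma \ref{sp} would force $A$ to be $S$-primary, hence trivially $S$-decomposable, contradicting $A\in\Sigma$.

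Thus $A$ is not $S$-irreducible, and negating the condition in Definition \ref{S-iir.} produces $s\in S$ and ideals $J,K$ of $R$ with $s(J\cap K)\subseteq A\subseteq J\cap K$ and, for every $s'\in S$, $ss'J\not\subseteq A$ and $ss'K\not\subseteq A$. The next (routine) verifications are: (i) $J$ and $K$ are nonnil, since each contains the nonnil ideal $A$; (ii) $J\cap S=\emptyset=K\cap S$ — if $t\in J\cap S$ then $tK\subseteq J\cap K$, hence $stK\subseteq A$, which for $s'=t$ contradicts the choice of $J$; (iii) $J,K\notin\Sigma$ — if $J\in\Sigma$, then $A\subseteq J$ gives $s_{0}J\subseteq A$, hence $ss_{0}J\subseteq A$, contradicting the choice of $J$ with $s'=s_{0}$. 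So $J$ and $K$ are $S$-decomposable, and concatenating their decompositions gives $b:=J\cap K=\bigcap_{c=1}^{N}Q_{c}$ with each $Q_{c}$ a $P_{c}$-$S$-primary ideal, $b$ nonnil, $b\cap S=\emptyset$, and crucially $s\,b\subseteq A\subseteq b$.

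The step I expect to be the main obstacle is precisely this two-sided $S$-containment $sb\subseteq A\subseteq b$: unlike in the classical argument one does not obtain $A=J\cap K$, so the decomposition of $b$ is not literally a decomposition of $A$. My remedy is to pass to the contraction $S(A)$. Choosing a witness $s_{c}\in S$ for each $Q_{c}$ and setting $w:=s\prod_{c=1}^{N}s_{c}\in S$, one checks (using the $S$-primary property of each $Q_{c}$ together with $s_{c}t\notin P_{c}$ for $t\in S$) that $S(A)=(A:w)$, and that $sb\subseteq A\subseteq b$ forces $S(A)=S(b)$. Since $S(b)=\bigcap_{c}S(Q_{c})=\bigcap_{c}(Q_{c}:s_{c})$ and, by the facts recorded in Remark \ref{11}, each $C_{c}:=(Q_{c}:s_{c})$ is an ordinary $(P_{c}:s_{c})$-primary ideal disjoint from $S$, it follows that $S(A)=\bigcap_{c=1}^{N}C_{c}$ is an honest primary decomposition.

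Finally, the equality $S(A)=(A:w)$ yields the identity $A=(A:w)\cap(A+Rw)$ exactly as verified in Remark \ref{11}, so
\[
A=\Big(\bigcap_{c=1}^{N}C_{c}\Big)\cap(A+Rw)=\bigcap_{c=1}^{N}\big(C_{c}\cap(A+Rw)\big).
\]
Each $C_{c}$ is primary and disjoint from $S$ while $A+Rw$ meets $S$ (it contains $w$), so Proposition \ref{ry}(2) shows that each $C_{c}\cap(A+Rw)$ is $S$-primary. Hence $A$ is $S$-decomposable, contradicting $A\in\Sigma$; therefore $\Sigma=\emptyset$ and $R$ is nonnil-$S$-Laskerian. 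The genuine care is needed in verifications (ii)--(iii) and in establishing $S(A)=(A:w)=S(b)$; everything else is bookkeeping with localizations and Proposition \ref{ry}.
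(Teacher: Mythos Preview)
Your argument is correct, and it takes a genuinely different route from the paper's proof.

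The paper argues as follows: from ``$I$ is not $S$-irreducible'' it first deduces that $I$ is not irreducible in the ordinary sense, writes $I=J\cap K$ with $I\neq J$ and $I\neq K$, and then asserts that for \emph{this particular} pair $(J,K)$ one has $sJ\nsubseteq I$ and $sK\nsubseteq I$ for every $s\in S$. With this in hand the argument ends immediately: $J,K\notin E$, both are $S$-decomposable, and the equality $I=J\cap K$ gives an $S$-decomposition of $I$.

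You instead negate Definition~\ref{S-iir.} literally, obtaining $s(J\cap K)\subseteq A\subseteq J\cap K$ together with $ss'J\nsubseteq A$ and $ss'K\nsubseteq A$ for all $s'\in S$. This is the honest negation, and it only yields $sb\subseteq A\subseteq b$ with $b=J\cap K$ rather than $A=b$; the price is the extra passage through $S(A)=S(b)=(A:w)$ and the identity $A=(A:w)\cap(A+Rw)$ from Remark~\ref{11}, followed by Proposition~\ref{ry}(2). Your verifications of (ii), (iii), of $S(A)=(A:w)$ (via $r\in S(A)=\bigcap_c(Q_c:s_c)\Rightarrow r\prod_c s_c\in b\Rightarrow rw\in sb\subseteq A$), and of the splitting $A=(A:w)\cap(A+Rw)$ all go through.

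What each approach buys: the paper's line is shorter because the equality $I=J\cap K$ makes the final step trivial, but the passage ``since $I$ is not $S$-irreducible, $sJ\nsubseteq I$ and $sK\nsubseteq I$ for all $s\in S$'' applied to a pair $(J,K)$ produced from \emph{ordinary} reducibility is not the direct negation of $S$-irreducibility and needs its own justification. Your route sidesteps that issue entirely at the cost of invoking the saturation machinery of Remark~\ref{11}; it is more self-contained in this respect. One cosmetic point: in (ii), the contradiction from $t\in J\cap S$ is with $ss'K\nsubseteq A$ (take $s'=t$), not with the condition on $J$.
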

	
	\begin{proof}
		\noindent
		Suppose $R$ is a nonnil-$S$-Noetherian ring. Let $E$ be the collection of nonnil ideals of $R$ which are disjoint with $S$ and can not be written as a finite intersection of $S$-primary ideals. We wish to show $E=\emptyset$. On the contrary suppose  $E\neq\emptyset$.  Since $R$ is a nonnil-$S$-Noetherian ring, by Lemma \ref{nonnil}, there exists an $S$-maximal element in $E$, say $I$. Evidently, $I$ is not an $S$-primary ideal, by Lemma \ref{sp}, $I$ is not an $S$-irreducible ideal, and so  $I$ is not an irreducible ideal. This implies that  $I=J\cap K$ for some ideals $J$ and $K$ of $R$ with $I\neq J$ and $I\neq K$. Since $I$ is nonnil, there exists $\alpha\in I\setminus Nil(R)$. This implies that $\alpha\in J$ and $\alpha\in K$ but $\alpha\notin Nil(R)$. Consequently, $J$ and $K$ both are nonnil ideals of $R$. Since $I$ is not $S$-irreducible, $sJ\nsubseteq I$ and $sK\nsubseteq I$ for all $s\in S$. Now, we claim that $J$, $K\notin E$. For this, if $J$ (respectively, $K$) belongs to $E$, then since $I$ is an $S$-maximal element of $E$ and $I\subset J$ (respectively, $I\subset K$), there exists $s'$ (respectively, $s''$) from $S$ such that $s'J \subseteq I$ (respectively, $s''K\subseteq I$). This is not possible, as $I$ is not $S$-irreducible. Therefore $J, K\notin E$. Also, if $J\cap S\neq\emptyset$ $(respectively, K\cap S\neq\emptyset)$, then there exist $s_{1}\in J\cap S$  $(respectively, s_{2}\in K\cap S)$. This implies that $s's_{1}\in s'J\subseteq I$ $(respectively, s''s_{2}\in s''K\subseteq I)$, which is a contradiction because $I$ disjoint with $S$. Thus $J$ and $K$ are also disjoint with $S$. This implies that $J$ and $K$ can be written as a finite intersection of $S$-primary ideals. Consequently, $I$ can also be written as a finite intersection of $S$-primary ideals since  $I=J\cap K$, a contradiction as $I\in E$. Thus $E=\emptyset$, i.e., every proper ideal of $R$ disjoint with $S$ can be written as a finite intersection of $S$-primary ideals. Hence $R$ is a nonnil-$S$-Laskerian ring.
	\end{proof}
	

\begin{thebibliography}{999}
		\bibitem{ah18}
		H. Ahmed (2018) :\textit{S-Noetherian spectrum condition.} Commun. Algebra 46:3314-3321.
		
		
		
		\bibitem{ad02} 
		D. D. Anderson and T. Dumitrescu (2002) :\textit{$S$-Noetherian rings.}  Commun. Algebra. 30:4407-4416.
		
		\bibitem{fm69} 
		M. F. Atiyah and I. G. MacDonald (1969) :\textit{Introduction to Commutative Algebra}. Addison-Wesley Publishing Company.
		
		\bibitem{ab03}
		A. Badawi (2003) :\textit{On Nonnil-Noetherian Rings}. Commun. Algebra 31:1669-1677.
		
		\bibitem{ab99}
		A. Badawi (1999) :\textit{On $\Upphi$-Pseudo-Valuation Rings}. Lecture Notes in Pure and Applied Mathematics, vol. 205, pp. 101-110. Marcel Dekker, New York.	
		
		\bibitem{ab22}
		A. Benhissi (2022) :\textit{Chain Conditions in Commutative Rings.} Springer https://doi.org/10.1007/978-3-031-09898-7.
		
		\bibitem{zb17}
		Z. Bilgin, M.L. Reyes and  U. Tekir (2018) :\textit{On right $S$-Noetherian rings and $S$-Noetherian modules}. Commun. Algebra. 46:863-869.
		
		\bibitem{em18}
		M. Eljeri (2018) :\textit{$S$-Strongly Finite Type Rings.} Asi. Rese. J. Math.  9:1-9.
		
		
		\bibitem{sh08}
		S. Hizem (2008) :\textit{Chain conditions in rings of the form $A + XB[X]$ and $A + XI[X]$.} In: M. Fontana, et al. (Eds.),	Com. Alg. and Its App.: Proceedings of the Fifth International Fez Conference on Comm. Alg.a and Its App., Fez, Morocco, W. de Gruyter Publisher, Berlin. 259-274.
		
		
		\bibitem{rg80}
		R. Gilmer and W. Heinzer (1980) :\textit{The Laskerian Property, Power Series Rings and Noetherian Spectra.}  Proc.  Amer.  Math. Soc. 79:13-16.
		
		\bibitem{mj20}
		M. J. Kwon and J. W. Lim (2020) :\textit{On Nonnil-S-Noetherian Rings}. Mathematics  8:14-28.
		
	 \bibitem{el05}
	E. Lasker (1905) :\textit{Zur Theorie der Moduln und Ideale.} Math. Ann. 60:20-116.
	
		\bibitem{me22}
		E. Massaoud (2022) :\textit{$S$-primary ideals of a commutative ring}. Commun. Algebra 50:988-997.
		
		\bibitem{sm22}
		S. Moulahi (2022) :\textit{Nonnil-Laskerian Rings}. Beitr Algebra Geom. 63:697-706.
		
		
		\bibitem {ne21} 
		E. Noether (1921) :\textit{Idealtheorie in Ringbereichen}. Math. Ann.  83:24--66.
		 
		
		\bibitem{ts23}
		T. Singh   A. U. Ansari and S. D. Kumar (2023) :\textit{S-Noetherian Rings, Modules and their generalizations.} Surv. Math. Appl.  18:163-182.
		
		\bibitem{ts24}
		T. Singh A.U. Ansari and  S. D. Kumar (2024) :\textit{Existence and Uniqueness of $S$-Primary Decomposition in $S$-Noetherian Modules.} Commun. Algebra 52:4515-4524.
		
		\bibitem{vs22}
		V. Subramanian (2022) :\textit{Some results on S-primary ideals of a commutative ring}. Beitr Algebra Geom.  63:247-266.
		
		\bibitem{vd17}
		D. C. Vella (2018) :\textit{Primary Decomposition in Boolean Rings}. Pi Mu Epsilon Journal  14:581-588.
		
	\end{thebibliography}
\end{document}